\newtheorem{theorem}{Theorem}
\newtheorem{lemma}[theorem]{Lemma}
\newtheorem{conjecture}[theorem]{Conjecture}
\theoremstyle{definition}
\def \deg {{\rm deg}}
\def \av {{\rm av}}
\def \leq {\leqslant}
\def \geq {\geqslant}
\def \le {\leqslant}
\def \ge {\geqslant}
\def \R {\mathbb{R}}
\def \d {\delta}
\def \mod#1{{\:({\rm mod}\ #1)}}
\let\oldproofname=\proofname
\renewcommand{\proofname}{\rm\bf{\oldproofname}}
\title{On the minimum degree required for a triangle decomposition}
\author{Peter J. Dukes\thanks{Department of Mathematics and Statistics, University of Victoria, Canada ({\tt dukes@uvic.ca}); research supported by NSERC grant 312595--2017} \and Daniel Horsley\thanks{School of Mathematics, Monash University, Australia ({\tt danhorsley@gmail.com}); research supported by ARC grants DP150100506 and FT160100048.}}
\date{}
\begin{document}
\setstretch{1.1}
\maketitle

\begin{abstract}
We prove that, for sufficiently large $n$, every graph of order $n$ with minimum degree at least $0.852n$ has a fractional edge-decomposition into triangles. We do this by refining a method used by Dross to establish a bound of $0.9n$. By a result  of Barber, K\"{u}hn, Lo and Osthus, our result implies that, for each $\epsilon >0$, every graph of sufficiently large order $n$ with minimum degree at least $(0.852+\epsilon)n$ has a triangle decomposition if and only if it has all even degrees and number of edges a multiple of three.
\end{abstract}

\section{Introduction}

A \emph{$K_3$-decomposition} of a graph $G$ is a set of triangles in $G$ whose edge sets partition $E(G)$. A \emph{fractional $K_3$-decomposition} of a graph $G$ is an assignment of nonnegative weights to the triangles of $G$ so that, for each edge of $G$, the sum of the weights of all the triangles containing that edge is 1. A $K_3$-decomposition can be viewed as a fractional $K_3$-decomposition in which each assigned weight is 0 or 1.

Obviously for a graph $G$ to have a $K_3$-decomposition, all its degrees must be even and its number of edges must be divisible by 3. We call such graphs \emph{$K_3$-divisible}. Kirkman \cite{Kir1847} showed that every complete graph $K_n$ which is $K_3$-divisible has a $K_3$-decomposition. Such a decomposition is equivalent to a Steiner triple system of order $n$; here, $K_3$-divisibility reduces to the familiar congruence condition $n \equiv 1$ or $3 \pmod{6}$.  Nash-Williams \cite{Nas1970} conjectured that a $K_3$-decomposition exists for every $K_3$-divisible graph with sufficiently high minimum degree. Although he equivocated somewhat on the degree threshold, his conjecture is usually stated as follows.

\begin{conjecture}[\cite{Nas1970}]\label{C:nashWilliams}
Every $K_3$-divisible graph of order $n$ with minimum degree at least $\frac{3}{4}n$ has a $K_3$-decomposition.
\end{conjecture}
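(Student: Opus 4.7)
I would follow the two-stage framing that the present paper makes explicit. First, prove that every graph $G$ of sufficiently large order $n$ with $\delta(G) \ge \tfrac{3}{4} n$ has a fractional $K_3$-decomposition. Second, combine this with the iterative absorption result of Barber, K\"uhn, Lo and Osthus to convert fractional into integral for every $K_3$-divisible $G$ with $\delta(G) \ge (\tfrac{3}{4}+\epsilon) n$. The second step is off the shelf, so essentially all the work is in the first. The residual $\epsilon$ loss would then have to be closed separately, presumably by a direct combinatorial argument for the boundary regime where $\delta(G)$ is very close to $\tfrac{3n}{4}$ and strong structural information about $G$ is available.

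\textbf{Attacking the fractional bound.} The plan is to continue the local weight-adjustment programme that Dross used to reach $0.9n$ and that the present paper refines to $0.852n$. One starts from a natural symmetric assignment---for each triangle $T=\{u,v,w\}$ a weight determined by the codegrees $|N(u)\cap N(v)|$ and its symmetric variants, or by the number of $K_4$'s containing $T$---and then measures, edge by edge, the discrepancy from $1$. Weight is then redistributed along short walks whose triangles donate or absorb a controlled amount without violating nonnegativity. The $0.852n$ bound apparently exploits a small family of shifts that simultaneously corrects first- and second-order errors; I would try to extend this either by adding shifts indexed by richer local configurations (such as $K_4$-patterns or longer alternating paths of triangles), or dually, by replacing the explicit constructions by solving the LP of feasible shifts directly via a Hall-type or flow condition on a codegree-weighted auxiliary bipartite graph.

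\textbf{Where the main difficulty lies.} The threshold $\tfrac{3}{4}n$ is known to be tight: there exist $K_3$-divisible graphs with minimum degree close to $\tfrac{3n}{4}$ that admit no $K_3$-decomposition, and indeed no fractional one. Any proof must therefore degrade gracefully and exhaust all of its slack precisely on graphs of that extremal shape. I expect the binding step to be a \emph{stability dichotomy}: either $G$ is structurally close to one of the known extremal configurations, in which case a bespoke argument constructs the fractional decomposition by hand using the rigid structure; or $G$ is far from every such configuration in an appropriate quantitative sense, in which case a generic weight-shifting argument has enough slack to succeed. Producing the required quantitative classification of near-extremal graphs at minimum degree close to $\tfrac{3n}{4}$ looks considerably more delicate than the arithmetic tightening that took $0.9n$ down to $0.852n$, and this is where I expect the proof to live or die.
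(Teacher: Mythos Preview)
The statement you are addressing is \emph{Conjecture~\ref{C:nashWilliams}}, not a theorem. The paper does not prove it and does not claim to; it is presented as Nash-Williams' open conjecture, and the paper's contribution is the weaker Theorem~\ref{T:fracMain} (fractional $K_3$-decomposition at minimum degree $0.852n$) together with its integral consequence Theorem~\ref{T:intMain}. So there is no ``paper's own proof'' to compare against, and what you have written is not a proof but a research programme.

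That programme is reasonable as a sketch of how one might hope to attack the conjecture, and your description of the fractional side is broadly consonant with what the paper actually does for $0.852n$: start from a uniform weighting, push weight along rooted pairs inside copies of $K_4$ (Lemma~\ref{L:drossSwitch}), and reduce the existence of a nonnegative adjustment to a max-flow/min-cut condition (Lemma~\ref{L:flowNetwork}). But you should be aware of the explicit barrier the paper records in Lemma~\ref{L:ConstructionImmuneDross}: the Dross-style $K_4$-shift method, with the natural per-pair capacity $c_{\max}$, provably cannot succeed for any $\delta>\tfrac{1}{6}$, i.e.\ cannot reach minimum degree below $\tfrac{5}{6}n$. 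Hence ``continuing the local weight-adjustment programme'' in its current form cannot get you from $0.852n$ down to $\tfrac{3}{4}n$; genuinely new moves (longer gadgets, different capacities, or an entirely different mechanism) would be required even for the fractional statement. Your stability-dichotomy idea for the boundary regime is plausible in spirit, but as written it is speculation rather than argument: no candidate classification of near-extremal graphs at $\delta(G)\approx \tfrac{3}{4}n$ is given, and the $C_4\cdot K_h$ family already shows that both the integral and the fractional thresholds are tight there. In short, nothing here is wrong as aspiration, but none of it constitutes a proof, and the paper itself supplies a concrete obstruction to the most direct continuation you propose.
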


For any positive $h \equiv 3 \mod{6}$, the graph $C_4 \cdot K_h$, in which each vertex of a $4$-cycle is blown up into a complete graph of order $h$,
is $(3h-1)$-regular and $K_3$-divisible but can be shown  not to have a $K_3$-decomposition nor even a fractional $K_3$-decomposition. This construction appeared first in Ron Graham's addendum to \cite{Nas1970}, and shows that the value of $\frac{3}{4}$ in Conjecture~\ref{C:nashWilliams} cannot be lowered, even if we weaken the conjecture to demand only fractional $K_3$-decompositions. Here we establish the following.

\begin{theorem}\label{T:fracMain}
There is an integer $N$ such that every graph of order $n > N$ and minimum degree at least $0.852n$ has a fractional $K_3$-decomposition.
\end{theorem}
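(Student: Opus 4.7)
The plan is to follow and refine Dross's framework. By LP duality, $G$ has a fractional $K_3$-decomposition if and only if every edge-weighting $y : E(G) \to \R$ satisfying $\sum_{e \in T} y(e) \le 0$ for every triangle $T$ of $G$ also satisfies $\sum_{e \in E(G)} y(e) \le 0$. Equivalently, one can construct the decomposition in two explicit stages: first assign triangle weights that produce edge-totals close to $1$, then correct the resulting defects by a second layer of local adjustments.

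For the first stage, I would define $w_0(T)$ for each triangle $T = xyz$ as a function of local parameters. The simplest choice is $w_0(T) = 1/c$ for a single constant $c$ calibrated to the minimum codegree, so that each edge $e$ receives weight $c(e)/c$, where $c(e)$ denotes the codegree of $e$; a more refined choice depending on the codegrees $c(xy), c(xz), c(yz)$ or on the vertex degrees can substantially reduce the variance in the edge-totals. Under the hypothesis $\delta(G) \ge 0.852n$, inclusion-exclusion gives $c(e) \ge (2 \cdot 0.852 - 1)n = 0.704n$ for every edge $e$, which tightly bounds how far these initial edge-totals can deviate from $1$.

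For the second stage I would use the standard $K_4$-gadget: for a $K_4$ on vertices $\{u,v,x,y\}$, adding $\varepsilon$ to the weights of $uvx$ and $uvy$ while subtracting $\varepsilon$ from the weights of $uxy$ and $vxy$ changes the edge-totals only on $uv$ (by $+2\varepsilon$) and on $xy$ (by $-2\varepsilon$). Using the abundance of $K_4$'s guaranteed by the high minimum-degree hypothesis, I would route excess weight from edges with surplus to edges with deficit via a fractional flow on an auxiliary graph whose vertices are the edges of $G$ and whose arcs represent available $K_4$-transfers; a conservation-of-defect identity ensures this flow is balanced in aggregate. The critical constraint is that all resulting triangle weights remain nonnegative.

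The main obstacle is quantitative: pushing the threshold from Dross's $0.9n$ down to $0.852n$ requires extracting sharper inequalities at essentially every step. Likely sources of improvement are (i) a less symmetric initial weighting that reduces worst-case defects by exploiting the actual distribution of codegrees, (ii) enriching the gadget repertoire beyond single $K_4$'s --- for instance, by using longer alternating sequences of edge-sharing triangles that can transport weight across more distant edges --- and (iii) a careful dual-LP argument showing that any obstruction to fractional decomposition must resemble the extremal $C_4 \cdot K_h$ configuration, which only obstructs below the $\tfrac{3}{4}n$ barrier. Coaxing the inequalities to deliver the constant $0.852$ while keeping every intermediate triangle weight nonnegative, and while covering all possible local configurations uniformly, will be the principal technical difficulty.
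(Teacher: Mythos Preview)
Your proposal correctly identifies the overall Dross framework --- uniform initial triangle weights plus $K_4$-switches routed via a flow --- but it is a strategy sketch rather than a proof, and the specific routes to improvement you propose are not the ones that actually reach $0.852$.

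In particular, the paper does \emph{not} use (i) a non-uniform initial weighting, (ii) gadgets beyond single $K_4$'s, or (iii) a structural characterisation of extremal obstructions. It keeps Dross's setup verbatim: weight $1/t_{\av}$ on every triangle, single-$K_4$ transfers only, and the max-flow/min-cut reduction that converts the nonnegativity constraint into a family of cut inequalities. Concretely, after passing to \emph{$(n,\delta)$-reduced} graphs (graphs in which every triangle has a vertex of near-minimum degree --- achievable by greedily deleting triangles), the task is to show that for every $A \subseteq E(G)$ with $t_A > t_{\av}$ the number $\kappa_A$ of rooted $K_4$-pairs separated by $A$ satisfies $\kappa_A \ge \lambda_A$, where $\lambda_A$ is an explicit function of $|A|$, $t_A$, $t_{\av}$. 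Your proposal never isolates this cut condition, which is the entire content of the problem once the framework is fixed.

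The genuine new work, which your proposal does not anticipate, consists of: (a) exploiting the reduced-graph hypothesis to obtain sharper upper bounds on $m$ and $t_{\av}$ than the minimum-degree condition alone gives; (b) a new ``window'' lower bound on $\kappa_A$, effective when $t_A$ is not too large, obtained by fixing a set $U$ of $\lceil (1-2\delta)n \rceil$ vertices and counting $A$-edges inside $U$; and (c) for the regime where $\alpha = |A|/m$ lies in a middle range and $t_A$ is large, a vertex-by-vertex analysis of the functions $e_A(v) = |\{e \in A : e \subseteq N_G(v)\}|$, an extremal determination of the worst-case profile of these values, and a computer-assisted verification of the resulting inequality over a three-dimensional parameter box $(\alpha,\tau,\mu)$. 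Your sketch contains no counterpart to any of these, and in particular no mechanism for the middle-$\alpha$ regime, which is precisely where Dross's convexity bound is weakest and where essentially all the difficulty in going below $0.9$ is concentrated. Finally, note that the paper also exhibits an explicit barrier showing this $K_4$-only framework cannot succeed for $\delta > \tfrac{1}{6}$, so your suggestion (iii) of pushing toward $\tfrac{3}{4}$ within this framework is unworkable in principle.
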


This result is an improvement on a similar theorem of Dross \cite{Dro}, in which the minimum degree threshold is $0.9n$.   Our proof follows the same general method of pushing triangle weights along $4$-cliques; indeed, our work in essence explores the limits of this approach. This is the latest in a sequence of minimum degree bounds of the form $(1-\delta)n$ sufficient for $K_3$-decompositions, starting with Gustavsson who showed \cite{Gus1991} that one can take $\delta=10^{-24}$, and followed by better values of $\delta$, \cite{Yus2005} then \cite{Gar2014} and finally \cite{Dro}, for the fractional relaxation.
Shortly after this paper appeared as a preprint, Delcourt and Postle \cite{DP} posted a preprint proving that one can take $\delta=0.1727$.

Together with \cite[Theorem 1.3]{BaKuLoOs2016}, Theorem~\ref{T:fracMain} immediately implies the following.

\begin{theorem}\label{T:intMain}
For each real number $\epsilon>0$, there is an integer $N'$ such that every $K_3$-divisible graph of order $n > N'$ and minimum degree at least $(0.852+\epsilon)n$ has a $K_3$-decomposition.
\end{theorem}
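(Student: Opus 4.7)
The plan is to obtain Theorem~\ref{T:intMain} as an immediate consequence of Theorem~\ref{T:fracMain} combined with the fractional-to-integer transfer result of Barber, K\"uhn, Lo and Osthus \cite[Theorem~1.3]{BaKuLoOs2016}. The latter is an absorption-based theorem which, roughly speaking, converts any minimum-degree threshold $\delta^*$ ensuring a fractional $K_3$-decomposition of large graphs into a minimum-degree threshold $\max(\delta^*,\,3/4) + \epsilon$ ensuring an \emph{integer} $K_3$-decomposition of large $K_3$-divisible graphs. So the derivation is essentially a two-step application of ``black boxes''.

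Fix $\epsilon > 0$. First I would invoke Theorem~\ref{T:fracMain} to obtain an integer $N$ such that every graph of order $n > N$ with minimum degree at least $0.852 n$ admits a fractional $K_3$-decomposition. Feeding this fractional threshold into \cite[Theorem~1.3]{BaKuLoOs2016} with slack parameter $\epsilon$ then produces an integer $N' \geq N$ such that every $K_3$-divisible graph of order $n > N'$ with minimum degree at least $(0.852+\epsilon)n$ admits an (integer) $K_3$-decomposition. The $\max$ in the BKLO absorbing threshold costs us nothing here because $0.852 > \tfrac{3}{4}$, so the obstruction coming from the $C_4 \cdot K_h$ construction mentioned in the introduction is already absorbed into our larger fractional threshold, and no additional loss appears beyond the $\epsilon$ that is already built into the statement of Theorem~\ref{T:intMain}.

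Since both ingredients are previously established, the argument is completely routine and there is no genuine obstacle. The only point to verify is that the hypothesis of \cite[Theorem~1.3]{BaKuLoOs2016} — a uniform minimum-degree lower bound for a fractional $K_3$-decomposition holding on all sufficiently large graphs — is exactly what Theorem~\ref{T:fracMain} supplies, so the match of hypotheses is immediate.
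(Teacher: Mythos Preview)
Your proposal is correct and matches the paper's own approach exactly: the paper states that Theorem~\ref{T:intMain} follows immediately from Theorem~\ref{T:fracMain} together with \cite[Theorem~1.3]{BaKuLoOs2016}, and your write-up simply spells out this one-line derivation in more detail. There is nothing to add or correct.
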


In fact, $\epsilon$ can be taken to equal $0$, as we discuss following the proof of Theorem~\ref{T:fracMain}.

The outline of the paper is as follows.  In Section~\ref{S:Dross}, we review the method of \cite{Dro} in detail and set up some notation to be used later.
In particular, we recall a sufficient condition for fractional $K_3$-decomposition of $G$ in terms of the number of $4$-cliques across a partition $(A,B)$ of $E(G)$. We also provide examples that demonstrate that the approach of \cite{Dro} (and likewise ours) cannot by itself solve the problem for $\delta > \frac{1}{6}$. We apply somewhat different strategies depending on the value of two key parameters: the size $|A|$ of one side of our partition and the average over $e \in A$ of the number of triangles containing $e$.  Section~\ref{S:bounds} establishes some basic bounds on these and other parameters.  In Section~\ref{S:outer-alpha}, estimates on crossing $4$-cliques are obtained by convexity arguments inspired by those in \cite{Dro}.  These are generally less effective when our cut $(A,B)$ is close to balanced.  Section~\ref{S:middle-alpha} finishes these remaining cases by classifying vertices according to the number of edges in $A$ induced by their neighbourhoods.

\section{The approach of Dross and a barrier to it}
\label{S:Dross}

For a graph $G$, let $\mathcal{T}(G)$ be the set of all triangles in $G$. For any assignment $\omega$ of weights to the triangles of a graph $G$ and any edge $xy \in E(G)$, we denote by $\omega(xy)$ the sum of $\omega(X)$ over all triangles $X$ in $G$ that contain the edge $xy$. We refer to $\omega(xy)$ as the \emph{weight on the edge $xy$}. For a set $V$, we denote the complete graph on vertex set $V$ by $K_V$. For a graph $G$ and a subset $U$ of $V(G)$, let $G[U]$ denote the subgraph of $G$ induced by $U$.

Let $n \geq 7$ be an integer and $\d$ be a real number such that $0 < \d < 1$. We say that a graph $G$ is an \emph{$(n,\delta)$-reduced graph} if $G$ has order $n$ and minimum degree at least $(1-\d) n$, and each triangle in $G$ has at least one vertex of degree at most $\lceil(1-\d)n+1\rceil$. To prove that every graph of order $n$ and minimum degree at least $(1-\d)n$ has a fractional $K_3$-decomposition, it suffices to show that every $(n,\delta)$-reduced graph has a fractional $K_3$-decomposition. To see this, note that if $G$ and $G'$ are graphs such that $G'$ is obtained from $G$ by deleting the edges of a triangle $X$ in $G$, then a fractional $K_3$-decomposition of $G'$ can be extended to a fractional $K_3$-decomposition of $G$ by simply assigning weight 1 to $X$ and weight 0 to each other triangle in $\mathcal{T}(G) \setminus \mathcal{T}(G')$.

To find a fractional $K_3$-decomposition of an $(n,\delta)$-reduced graph $G$ with $m$ edges, Dross begins by assigning weight $\frac{m}{3|\mathcal{T}(G)|}$ to each triangle in $G$. This means that the sum of the weights on the edges of $G$ is $m$, because each triangle in $G$ contributes its weight to three edges. He then repeatedly uses an elegant switch, which we encapsulate in Lemma~\ref{L:drossSwitch}, to modify this initial assignment of weights until a fractional $K_3$-decomposition of $G$ is obtained. We call a pair of non-adjacent edges $\{ab,cd\}$ in a graph $G$ a \emph{rooted pair} if $G[\{a,b,c,d\}]$ is a copy of $K_4$.

\begin{lemma}[\cite{Dro}]\label{L:drossSwitch}
Let $G$ be a graph, and let $\omega:\mathcal{T}(G) \rightarrow \R$ be an assignment of weights to the triangles of $G$. Let $\epsilon$ be a positive real number, let $ab$ and $cd$ be a rooted pair of edges in $G$, and take a new assignment of weights $\omega':\mathcal{T}(G) \rightarrow \R$ defined by
\[\omega'(X)=
\left\{
  \begin{array}{ll}
    \omega(X)-\frac{\epsilon}{2}, & \hbox{if $X \in \{(a,b,c),(a,b,d)\}$;} \\
    \omega(X)+\frac{\epsilon}{2}, & \hbox{if $X \in \{(a,c,d),(b,c,d)\}$;} \\
    \omega(X), & \hbox{otherwise.}
  \end{array}
\right.\]
Then $\omega'(ab)=\omega(ab)-\epsilon$, $\omega'(cd)=\omega(cd)+\epsilon$, and $\omega'(xy)=\omega(xy)$ for each $xy \in E(G) \setminus \{ab,cd\}$.
\end{lemma}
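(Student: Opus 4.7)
The plan is to prove the lemma by a direct accounting of how the weight on each edge changes, noting that the four triangles whose weights are modified are precisely the four triangles of the $K_4$ induced on $\{a,b,c,d\}$.

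First I would observe that since $\{ab,cd\}$ is a rooted pair, the subgraph $G[\{a,b,c,d\}]$ is a copy of $K_4$, whose six edges are $ab,ac,ad,bc,bd,cd$ and whose four triangles are exactly $(a,b,c),(a,b,d),(a,c,d),(b,c,d)$. For any edge $xy \notin E(K_{\{a,b,c,d\}})$, no triangle of $G$ containing $xy$ uses both endpoints of an edge of this $K_4$ as its other two vertices; in particular no such triangle lies in the four-element set whose weights were altered, so $\omega'(xy)=\omega(xy)$.

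It then remains to check the six edges of the induced $K_4$. For each such edge $xy$, the triangles of $G$ containing $xy$ fall into two classes: those using a third vertex from $\{a,b,c,d\}$ (which are among the four modified triangles) and those using a third vertex outside $\{a,b,c,d\}$ (which are unmodified). Thus the change $\omega'(xy)-\omega(xy)$ equals the sum over the (at most two) triangles of $K_{\{a,b,c,d\}}$ containing $xy$ of the change in their weight. A quick tabulation gives: edge $ab$ lies in the two triangles $(a,b,c),(a,b,d)$ that each decreased by $\tfrac{\epsilon}{2}$, giving a net change of $-\epsilon$; edge $cd$ lies in $(a,c,d),(b,c,d)$ that each increased by $\tfrac{\epsilon}{2}$, giving a net change of $+\epsilon$; and each of the four remaining edges $ac,ad,bc,bd$ lies in exactly one ``decreased'' and one ``increased'' triangle, so the contributions cancel and the weight is unchanged.

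There is no genuine obstacle here: the lemma is essentially an observation that the signed indicator of the four triangles of a $K_4$ produces the required $\pm\epsilon$ change on a pair of opposite edges and zero change elsewhere. The only point requiring a little care is to separate, for each edge of the $K_4$, the triangles through it that use a fourth vertex of $\{a,b,c,d\}$ from those that do not, so as to justify that only the former contribute to $\omega'(xy)-\omega(xy)$.
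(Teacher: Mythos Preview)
Your argument is correct: the verification by direct edge-by-edge accounting on the $K_4$ is exactly what is needed, and you have handled the only subtle point (that triangles through an edge of the $K_4$ with third vertex outside $\{a,b,c,d\}$ are untouched) cleanly. The paper itself does not supply a proof of this lemma; it is stated with attribution to \cite{Dro} and treated as an elementary observation, so there is nothing further to compare.
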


We will refer to applying Lemma~\ref{L:drossSwitch} as \emph{sending weight $\epsilon$ from $ab$ to $cd$}. Given an initial assignment of weight $\frac{m}{3|\mathcal{T}(G)|}$ to each triangle of a sufficiently dense graph $G$, we can repeatedly apply Lemma~\ref{L:drossSwitch} to adjust the weighting to one in which each edge has weight 1. This assignment will only be a fractional decomposition, however, if we can ensure that the final weight of each triangle is nonnegative.

We introduce some notation that we will employ frequently throughout the remainder of the paper. All of this notation is implicitly dependent on a fixed $(n,\delta)$-reduced graph $G$ that will always be clear from context. We define $m=|E(G)|$. For each edge $e=uv$ of $G$, we let $T_{e}$ be the set of vertices adjacent in $G$ to both $u$ and $v$, and let $t_{e}=|T_{e}|$. For $S \subseteq E(G)$, let $t_S=\frac{1}{|S|}\sum_{e \in S}t_e$ and let $t_\av=t_{E(G)}=\frac{3|\mathcal{T}(G)|}{m}$. Note that the initial weight Dross assigns to each triangle is equal to $\frac{1}{t_\av}$. Let $A$ be a subset of $E(G)$. We abbreviate $\frac{|A|}{m}$ to $\alpha$. We say that a rooted pair $\{e_1,e_2\}$ in $G$ is \emph{separated by} $A$ when $|\{e_1,e_2\} \cap A|=1$ and we define $\kappa_A$ to be the number of rooted pairs in $G$ that are separated by $A$. Finally we let
\[\lambda_A = \tfrac{3}{2}|A|\lceil(1-\d)n-1\rceil(t_A-t_\av).\]

In order to determine where to use Lemma~\ref{L:drossSwitch}, Dross employs an auxiliary flow network. We synthesise the argument in Lemma~\ref{L:flowNetwork}.  For the purposes of the next two lemmas, we define
\[c_{\max}=\mfrac{2}{3t_{\av}\lceil(1-\d)n-1\rceil}.\]

\begin{lemma}[\cite{Dro}]\label{L:flowNetwork}
An $(n,\delta)$-reduced graph $G$ has a fractional $K_3$-decomposition if, for each subset $A$ of $E(G)$ with $t_A > t_\av$, we have $\kappa_A \geq \lambda_A$.

\end{lemma}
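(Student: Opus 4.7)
The plan is to translate the problem of constructing a fractional $K_3$-decomposition into a maximum-flow question, arranged so that the hypothesis $\kappa_A\geq \lambda_A$ becomes exactly the min-cut condition. Start from the uniform weighting $\omega_0(X)=1/t_\av$ on every $X\in\mathcal{T}(G)$, which gives $\omega_0(e)=t_e/t_\av$ on each edge. Edges with $t_e>t_\av$ carry a surplus $(t_e-t_\av)/t_\av$ that must be exported, and those with $t_e<t_\av$ carry a matching deficit; by double counting the two totals agree and call their common value $D$. Build a directed flow network $N$ on the vertex set $\{s,t\}\cup\{v_e:e\in E(G)\}$ with an arc $s\to v_e$ of capacity $(t_e-t_\av)^+/t_\av$, an arc $v_e\to t$ of capacity $(t_\av-t_e)^+/t_\av$, and for every rooted pair $\{e_1,e_2\}$ two opposing arcs $v_{e_1}\leftrightarrow v_{e_2}$ each of capacity $c_{\max}$. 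A maximum flow saturating all source arcs corresponds to a valid sequence of Dross switches yielding edge weight $1$ throughout.

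Next I would verify that the cut condition matches the hypothesis. For an $s$-$t$ cut $(S,\overline S)$ with $B=\{e:v_e\in S\}$, a short rearrangement gives
\[\mathrm{cap}(S,\overline S)-D=\tfrac{1}{t_\av}|B|(t_\av-t_B)+c_{\max}\kappa_B.\]
This is automatically nonnegative when $t_B\leq t_\av$; when $t_B>t_\av$, substituting $c_{\max}=2/(3t_\av\lceil(1-\d)n-1\rceil)$ reduces the condition to $\kappa_B\geq\lambda_B$, which is exactly the hypothesis. Max-flow/min-cut then furnishes a flow $f$ saturating every source and sink arc, and I would convert $f$ into Dross switches by applying Lemma~\ref{L:drossSwitch} on each rooted pair with $\epsilon$ equal to the net flow across it. Saturation guarantees that the final weight on every edge equals $1$.

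The main obstacle, and the motivation for the specific value of $c_{\max}$, is to ensure that no triangle finishes with negative weight. Fix a triangle $X=(a,b,c)$; the rooted-pair flows that decrease $\omega(X)$ all come from $K_4$s extending $X$, and in each such $K_4$ on $\{a,b,c,d\}$ exactly three directed flows can decrease $\omega(X)$, namely $ab\to cd$, $ac\to bd$ and $bc\to ad$, each contributing at most $c_{\max}/2$. If $k$ is the number of $K_4$s containing $X$, the total decrease is thus at most $\tfrac{3}{2}k\,c_{\max}$. Because $G$ is $(n,\d)$-reduced, some vertex of $X$ has degree at most $\lceil(1-\d)n+1\rceil$, forcing $k\leq\lceil(1-\d)n-1\rceil$; substituting the given $c_{\max}$ bounds the total decrease by $1/t_\av$, which is exactly the initial weight on $X$. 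Hence every triangle keeps nonnegative weight and the final assignment is a fractional $K_3$-decomposition.
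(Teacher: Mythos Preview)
Your proposal is correct and follows essentially the same route as the paper: start from the uniform weighting $1/t_{\av}$, encode the required weight transfers as a flow network with source/sink arcs of capacity $|t_e-t_{\av}|/t_{\av}$ and rooted-pair arcs of capacity $c_{\max}$, reduce the min-cut condition to $\kappa_A\geq\lambda_A$, and then use the $(n,\delta)$-reduced property to bound the number of $K_4$'s through any triangle and hence certify nonnegativity. The only cosmetic differences are notational (your source-side set $B$ is the paper's $A$, your $D$ is the paper's $z$) and that you take net flow across each rooted pair rather than treating the two arcs separately, which yields the same $\tfrac{3}{2}k\,c_{\max}$ bound.
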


\begin{proof}
Let $E^+=\{e \in E(G):t_e > t_{\av}\}$ and $E^-=\{e \in E(G):t_e < t_{\av}\}$. Let $w_{\av}=\frac{1}{t_{\av}}=\frac{m}{3|\mathcal{T}(G)|}$ and let $\omega$ be the weighting of the triangles in $G$ that assigns weight $w_{\av}$ to each triangle. Observe that each edge $e \in E(G)$ has $\omega(e)=t_ew_{\av}$ and that this is $t_ew_{\av}-1$ greater than the desired weight of 1 if $e \in E^+$ and $1-t_ew_{\av}$ smaller than $1$ if $e \in E^-$. Let $z=\sum_{e \in E^+}(t_ew_{\av}-1)$ be the sum of the excess weights on the edges of $E^+$ and note that we also have $z=\sum_{e \in E^-}(1-t_ew_{\av})$ because the sum of the weights on all the edges of $E(G)$ is $m$.

We construct a flow network $N$ on vertex set $E(G) \cup \{s,t\}$, where is $s$ a source and $t$ a sink, whose arcs are given as follows.
\begin{itemize}[nosep]
    \item
For each rooted pair of edges $\{e_1,e_2\}$ in $G$, there are arcs of capacity $c_{\max}$ from $e_1$ to $e_2$ and from $e_2$ to $e_1$.
    \item
For each edge $e \in E^+$ there is an arc of capacity $t_ew_{\av}-1$ from $s$ to $e$.
    \item
For each edge $e \in E^-$ we add an arc of capacity $1-t_ew_{\av}$ from $e$ to $t$.
\end{itemize}
We now prove that if $N$ admits a flow of magnitude $z$, then $G$ has a fractional $K_3$-decomposition.

Suppose that $N$ admits a flow of magnitude $z$. Such a flow uses each arc of $N$ adjacent to either the source or the sink at full capacity and thus, for each $e \in E(G)$, $e$ has a net flow out from it of $t_ew_{\av}-1$ if $e \in E^+$ and $e$ has a net flow into it of $1-t_ew_{\av}$ if $e \in E^-$. Furthermore, the flow from $e_1$ to $e_2$ is at most $c_{\max}$ for any rooted pair $\{e_1,e_2\}$. Let $\omega'$ be the weighting of the triangles in $G$ obtained by beginning with $\omega$ and, for each arc $(e_1,e_2)$ of $N$ such that $e_1,e_2 \in E(G)$, using Lemma~\ref{L:drossSwitch} to shift weight $\epsilon$ from $e_1$ to $e_2$ where $\epsilon$ is the flow along the arc $(e_1,e_2)$. Then we have $\omega'(e)=1$ for each $e \in E(G)$ by the properties of the flow. Furthermore, the weight sent through any rooted pair is at most $c_{\max}$, and each triangle is in at most $\lceil(1-\d)n-1\rceil$ copies of $K_4$ (recall $G$ is $(n,\delta)$-reduced) and hence in at most $3\lceil(1-\d)n-1\rceil$ rooted pairs. So, for each $X \in \mathcal{T}(G)$,
\[\omega'(X) \geq w_{\av}-\tfrac{3}{2}\lceil(1-\d)n-1\rceil c_{\max}=0.\]
Thus $\omega'$ is a fractional $K_3$-decomposition of $G$.

So it suffices to show that $N$ admits a flow of magnitude $z$ if the hypothesis of the lemma is satisfied. By the max-flow min-cut theorem, $N$ admits a flow of magnitude $z$ if and only if the capacity of $N$ across each cut is at least $z$. Let $\{A \cup \{s\},B \cup \{t\}\}$ be a cut of $N$, where $(A,B)$ is a bipartition of $E(G)$. The capacity across this cut is
\begin{equation}\label{E:cutCapacity}
\medop\sum_{e \in A \cap E^-}(1-t_ew_{\av})+\medop\sum_{e \in B \cap E^+}(t_ew_{\av}-1)+\kappa_{A} c_{\max}.
\end{equation}
Now $\sum_{e \in B \cap E^+}(t_ew_{\av}-1)=z-\sum_{e \in A \cap E^+}(t_ew_{\av}-1)$ and hence \eqref{E:cutCapacity} is equal to
\begin{equation}\label{E:cutCapacity2}
z+\kappa_{A}c_{\max} - \medop\sum_{e \in A}(t_ew_{\av}-1)=z+\kappa_{A}c_{\max} - |A|(t_{A}w_{\av}-1).
\end{equation}
If $t_A \leq t_\av$, then $t_Aw_\av \le 1$ and \eqref{E:cutCapacity2} is clearly at least $z$. So we may assume that $t_A > t_\av$. Using $w_{\av}=\frac{1}{t_{\av}}$ and the definition of $c_{\max}$, we see that this last expression is at least $z$ exactly when $\kappa_A \geq \lambda_A$. Thus $N$ admits a flow of magnitude $z$ by the hypotheses of the lemma.
\end{proof}

Roughly speaking, when $G$ has high minimum degree, we expect that it will contain many copies of $K_4$ and hence, for any subset $A$ of $E(G)$, many rooted pairs separated by $A$. That is, $\kappa_A$ will be large and we can hope that $\kappa_A \geq \lambda_A$.
Our overall strategy here is the same as that of \cite{Dro} in that we ultimately use Lemma~\ref{L:flowNetwork} to prove our result. However, we improve the analysis of \cite{Dro} in several ways. Firstly, we obtain stronger bounds on some key parameters of $G$ (see Lemmas~\ref{L:mBound} and \ref{L:tavBound} to follow). Secondly, we extend arguments from \cite{Dro} to produce a new bound on $\kappa_A$ that is particularly effective when $t_A$ is low (see Lemmas~\ref{L:window} and \ref{L:windowCor}). Finally, to deal with the remaining cases, which occur when $\alpha$ is in a middle range, we introduce a new approach to bound $\kappa_A$ (see Section~\ref{S:middle-alpha}).  This approach considers, for each vertex $v$ of $G$, the set of edges in $A$ whose endpoints are both neighbours of $v$ and investigates how these sets intersect.

We now observe that using Lemma~\ref{L:drossSwitch} cannot, by itself, solve the problem for $\delta>\frac{1}{6}$.

\begin{lemma}
\label{L:ConstructionImmuneDross}
For each real $\epsilon>0$, there is an $(n,\delta)$-reduced graph $G$ with $\d<\frac{1}{6}+\epsilon$ for which a fractional $K_3$-decomposition cannot be obtained by first assigning each triangle weight $\frac{1}{t_{\av}}$ and then applying Lemma~\ref{L:drossSwitch} in such a way that each rooted pair in $G$ has weight at most $c_{\max}$ sent through it.
\end{lemma}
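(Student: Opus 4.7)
The plan is to invoke Lemma~\ref{L:flowNetwork} in reverse: by max-flow min-cut, the procedure described in that lemma succeeds on an $(n,\delta)$-reduced graph $G$ if and only if every $A\subseteq E(G)$ with $t_A>t_{\av}$ satisfies $\kappa_A\geq\lambda_A$. So, to prove the present lemma, it suffices to exhibit, for each $\epsilon>0$, an $(n,\delta)$-reduced graph $G$ with $\delta<\tfrac{1}{6}+\epsilon$ together with a single obstructing set $A\subseteq E(G)$ for which $t_A>t_{\av}$ and $\kappa_A<\lambda_A$.

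The basic template is Graham's construction $G_0=C_4\cdot K_h$, with $A$ the union of the within-blob edge sets. A short computation gives $t_e=3h-2$ on within-blob edges and $t_e=2h-2$ on cross-blob edges, so $t_A>t_{\av}$. Classifying the $4$-cliques of $G_0$ by blob-intersection pattern, the all-in-one-blob cliques and the $(2,2)$-cliques (whose two within-blob edges sit as opposite sides of the clique) each contribute zero separated rooted pairs, while only the $(3,1)$-cliques contribute, three each. The bookkeeping yields $\kappa_A\sim 4h^4$ and $\lambda_A\sim 6h^4$, so $\kappa_A<\lambda_A$ for $h$ sufficiently large. Since $\delta(G_0)=(h+1)/(4h)\to 1/4$, this already establishes the lemma whenever $\epsilon>1/12$.

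For smaller $\epsilon$, the plan is to augment $G_0$ by an auxiliary piece (for instance the complete join with a triangle-free graph $H$, or a more elaborate multipartite blow-up whose blob graph still contains $C_4$ as a substructure) with sizes tuned so that every vertex has degree $\lceil(1-\delta)n\rceil$ for some $\delta$ just above $\tfrac{1}{6}$. The choice of $A$ is again essentially the within-blob edges of the $C_4$-blocks, possibly extended by partner edges from the auxiliary piece. The hard part is the trade-off between lowering $\delta$ and preserving $\kappa_A<\lambda_A$: the new $4$-cliques created by attaching $H$ typically straddle $A$ and threaten to inflate $\kappa_A$ faster than $\lambda_A$. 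The construction must therefore be arranged so that most of these new cliques either avoid $A$ or have their $A$-edges opposite in the clique (mimicking the benign $(2,2)$-cliques of $G_0$), contributing no separated rooted pair. Once a suitable pair $(G,A)$ is identified, the strict inequality $\kappa_A<\lambda_A$ reduces to a polynomial-in-$h$ calculation and the lemma follows by taking $h$ sufficiently large in terms of $\epsilon$.
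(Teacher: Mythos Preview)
Your argument is only complete for $\epsilon>\tfrac{1}{12}$.  For smaller $\epsilon$ you do not give a construction; you describe desiderata that an auxiliary piece $H$ ``must'' satisfy and remark that the trade-off between lowering $\delta$ and keeping $\kappa_A<\lambda_A$ is ``the hard part,'' but you never carry it out.  Saying that the construction ``must be arranged so that most of these new cliques\ldots contribute no separated rooted pair'' is a wish, not a proof; it is not at all clear that any complete-join (or multipartite blow-up) of $C_4\cdot K_h$ pushes $\delta$ down near $\tfrac{1}{6}$ while keeping the inequality $\kappa_A<\lambda_A$.  As written, the proposal proves the lemma only for $\epsilon>\tfrac{1}{12}$.

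The paper takes a quite different route.  Instead of looking for an edge-set $A$ giving a bad cut in the flow network, it finds an obstruction concentrated at a \emph{single edge}.  The graph is six independent sets $V_1,\dots,V_6$ of size $h$, joined pairwise by $K_{h,h}$ minus a matching, together with two universal vertices $u,v$; this is $(6h+2,\delta)$-reduced with $\delta=\tfrac{h+5}{6h+2}\to\tfrac{1}{6}$.  The edge $uv$ lies in $6h$ triangles and in only $\binom{6}{2}h(h-1)$ rooted pairs, so after any switching with the $c_{\max}$ cap its weight is at least
\[
\frac{6h}{t_{\av}}-\binom{6}{2}h(h-1)\,c_{\max},
\]
which a short calculation shows is strictly bigger than $1$.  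In your cut language this is just the choice $A=\{uv\}$, but it is precisely the presence of the two universal vertices that makes this singleton cut decisive and lets the construction reach $\delta$ arbitrarily close to $\tfrac{1}{6}$ in one step, with no ``auxiliary piece'' needed.

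A minor separate point: you invoke Lemma~\ref{L:flowNetwork} as an if-and-only-if.  The lemma as stated gives only the sufficient direction; the converse does follow from the max-flow/min-cut analysis in its proof, but you should say so explicitly rather than cite the lemma as a biconditional.
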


\begin{proof}
Let $h$ be a positive integer sufficiently large that $\frac{h+5}{6h+2}<\frac{1}{6}+\epsilon$. We will construct a $(6h+2,\delta)$-reduced graph $G$ with $\d=\frac{h+5}{6h+2}$. Let $G$ be a graph of order $6h+2$ with vertex set $V_1 \cup \cdots \cup V_6 \cup \{u,v\}$ such that
\begin{itemize}[nosep]
    \item
for each $i \in \{1,\ldots,6\}$, $|V_i|=h$ and $G[V_i]$ is empty;
    \item
for each $x \in \{u,v\}$, $x$ is adjacent to each other vertex in $G$; and
    \item
for all distinct $i,j \in \{1,\ldots,6\}$, $G[V_i \cup V_j]$ is isomorphic to the graph obtained from $K_{h,h}$ by removing the edges of a $1$-factor.
\end{itemize}
Then $\deg_G(x)=6h+1$ for each $x \in \{u,v\}$ and $\deg_G(x)=5h-3$ for each $x \in V(G) \setminus \{u,v\}$, and hence $G$ is indeed $(6h+2,\delta)$-reduced with $\d=\frac{h+5}{6h+2}$.

The edge $uv$ is in $6h$ triangles in $G$ and so initially receives weight $\frac{6h}{t_{\av}}$. Furthermore, the edge $uv$ is in $\binom{6}{2}h(h-1)$ rooted pairs. So if each of these has weight at most $c_{\max}$ sent through it, then the final weight of the edge $uv$ will be at least
\[\mfrac{6h}{t_{\av}}-\mbinom{6}{2}h(h-1)c_{\max} = 1+\mfrac{210h^2-269h+89}{3(5h-4)(10h^2-15h+8)}.\]
Because the right hand expression is strictly greater than 1, this proves the result. The equality can be established by noting that $\d=\frac{h+5}{6h+2}$, that $t_{\av}=\frac{3|\mathcal{T}(G)|}{m}$, that $m=15h^2-3h+1$ by a degree sum argument, and that $|\mathcal{T}(G)|=2h(10h^2-15h+8)$ (because there are $\binom{6}{3}h(h-1)(h-2)$ triangles in $G$ that contain neither $u$ nor $v$,  $2\binom{6}{2}h(h-1)$ that contain exactly one of $u$ or $v$, and $6h$ that contain both $u$ and $v$).
\end{proof}

\section{Bounds on parameters of \texorpdfstring{$\bm{G}$}{G}}
\label{S:bounds}

In this section we prove some bounds on $\lambda_A$, $m$, $t_{\av}$ and $t_A$ that will be useful later.

\begin{lemma}\label{L:reqNumBound}
For any $(n,\delta)$-reduced graph $G$ and subset $A$ of $E(G)$,
\begin{itemize}
    \item[(i)]
$\lambda_A \leq \tfrac{3}{2}\alpha(1-\alpha)m\lceil(1-\d)n-1\rceil(t_A-(1-2\d)n)$; and
    \item[(ii)]
$\lambda_A \leq \tfrac{3}{2}(1-\alpha)m\lceil(1-\d)n-1\rceil(t_\av-(1-2\d)n)$
\end{itemize}
\end{lemma}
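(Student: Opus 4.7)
The plan is to reduce both inequalities to the single pointwise bound $t_e \ge (1-2\d)n$ for every edge $e\in E(G)$. This follows directly from the minimum degree hypothesis: for an edge $e=uv$, the sets $N(u)\setminus\{v\}$ and $N(v)\setminus\{u\}$ both have size at least $(1-\d)n-1$ and both sit inside the $(n-2)$-element set $V(G)\setminus\{u,v\}$, so inclusion--exclusion gives $t_e = |N(u)\cap N(v)| \ge 2((1-\d)n-1) - (n-2) = (1-2\d)n$. Averaging, $t_S\ge(1-2\d)n$ for every nonempty $S\subseteq E(G)$.

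Next, set $B=E(G)\setminus A$, so that $|A|=\alpha m$ and $|B|=(1-\alpha)m$. The double-counting identity $|A|t_A+|B|t_B=m\,t_\av$ rearranges to give the two useful forms
\[ t_A - t_\av = (1-\alpha)(t_A - t_B) \qquad\text{and}\qquad \alpha(t_A - t_\av) = (1-\alpha)(t_\av - t_B). \]
Substituting the first identity into the definition $\lambda_A=\tfrac{3}{2}|A|\lceil(1-\d)n-1\rceil(t_A-t_\av)$ and using $|A|=\alpha m$ rewrites $\lambda_A$ as $\tfrac{3}{2}\alpha(1-\alpha)m\lceil(1-\d)n-1\rceil(t_A-t_B)$; since the prefactor is nonnegative, invoking $t_B\ge(1-2\d)n$ then yields (i). Substituting the second identity similarly gives $\lambda_A=\tfrac{3}{2}(1-\alpha)m\lceil(1-\d)n-1\rceil(t_\av-t_B)$, and the same lower bound on $t_B$ yields (ii).

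There is no real obstacle here: the argument is a short manipulation once the pointwise estimate on $t_e$ is in hand. The only detail worth mentioning is the degenerate cases $A=\emptyset$ and $A=E(G)$, where $t_A$ or $t_B$ is a priori undefined. In both, $\lambda_A=0$ (either because $|A|=0$, or because $t_A=t_\av$), while the claimed right-hand sides are nonnegative (using $t_\av\ge(1-2\d)n$ from the pointwise bound), so both inequalities hold trivially.
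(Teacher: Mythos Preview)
Your proof is correct and follows essentially the same approach as the paper: both arguments rest on the pointwise bound $t_e\ge(1-2\d)n$, deduce $t_B\ge(1-2\d)n$, and combine this with the weighted-average relation $t_\av=\alpha t_A+(1-\alpha)t_B$ to bound $t_A-t_\av$ in the two required ways. Your only differences are cosmetic---you spell out the inclusion--exclusion for $t_e$ and package the averaging relation as two rearranged identities---and your handling of the degenerate cases $A=\emptyset$, $A=E(G)$ is a nice addition.
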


\begin{proof}
Let $B=E(G) \setminus A$. Because $G$ has minimum degree at least $(1-\d)n$, we have that $t_e \geq (1-2\d)n$ for each $e \in E(G)$ and hence that $t_B \geq (1-2\d)n$. So, because $t_\av = \alpha t_A + (1-\alpha)t_B$,
\begin{equation}\label{E:tavObvious}
t_\av \geq \alpha t_A + (1-\alpha)(1-2\d)n.
\end{equation}
Now (i) follows by using \eqref{E:tavObvious} in the definition of $\lambda_A$. Rewriting \eqref{E:tavObvious} as $t_A \leq \frac{1}{\alpha}(t_\av-(1-\alpha)(1-2\d)n)$ and using this in the definition of $\lambda_A$ produces (ii).
\end{proof}

For convenience we will often slightly weaken these bounds by replacing $\lceil(1-\d)n-1\rceil$ with $(1-\d)n$.

\begin{lemma}\label{L:mBound}
For any $(n,\delta)$-reduced graph $G$,
\begin{itemize}
    \item[(i)]
$\medop\sum_{v \in U}\deg_G(v) < |U|((1-\d)n+2)+\tfrac{1}{2}(\d n -2)^2$ for any $U \subseteq  V(G)$; and
    \item[(ii)]
$m < \left(\mfrac{2-2\d+\d^2}{4}\right)n^2+(1-\d)n+1.$
\end{itemize}
\end{lemma}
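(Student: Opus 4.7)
I would dispose of part (ii) immediately as the specialization of part (i) to $U = V(G)$: using $\sum_v \deg_G(v) = 2m$ and the identity $\tfrac12 n((1-\d)n+2) + \tfrac14 (\d n - 2)^2 = \tfrac{2-2\d+\d^2}{4}n^2 + (1-\d)n + 1$, dividing the (i)-bound by $2$ gives (ii). So the work is all in part (i), whose key structural input is that the set $H := \{v \in V(G) : \deg_G(v) > \lceil(1-\d)n + 1\rceil\}$ of ``high-degree'' vertices spans a triangle-free subgraph of $G$: any triangle contained in $G[H]$ would have all three vertices of degree exceeding $\lceil(1-\d)n+1\rceil$, contradicting the defining property of an $(n,\d)$-reduced graph.

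Writing $L := V(G) \setminus H$, $u_L := |U \cap L|$ and $u_H := |U \cap H|$, I would bound the contributions from $U \cap L$ and $U \cap H$ separately. The first is immediate: each $v \in U \cap L$ has $\deg_G(v) \leq \lceil(1-\d)n+1\rceil < (1-\d)n + 2$, so the total is less than $u_L((1-\d)n + 2)$. For the second, I would split $\deg_G(v) = |N(v) \cap H| + |N(v) \cap L|$ for $v \in U \cap H$ and sum. The $L$-portion is at most $u_H(n - |H|)$ trivially, and for the $H$-portion the double-counting identity
\[\medop\sum_{v \in U\cap H}|N(v) \cap H| = 2\,e(G[U\cap H]) + e(G[U \cap H,\, H \setminus U]),\]
combined with Mantel's theorem on the triangle-free graph $G[U \cap H]$ (giving $e(G[U\cap H]) \leq u_H^{\,2}/4$) and the trivial bipartite cross-bound $u_H(|H| - u_H)$, yields $\tfrac12 u_H^{\,2} + u_H(|H| - u_H)$. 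When added to $u_H(n - |H|)$ the $|H|$-terms conveniently cancel, leaving $\sum_{v \in U \cap H}\deg_G(v) \leq u_H(n - u_H/2)$.

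Finally I would rewrite $u_H(n - u_H/2) = u_H((1-\d)n + 2) + u_H\bigl((\d n - 2) - u_H/2\bigr)$ and bound the last term---a downward-opening quadratic in $u_H$ maximised at $u_H = \d n - 2$---by $\tfrac12(\d n - 2)^2$; combined with the low-degree contribution this delivers (i). The main obstacle I anticipate is resisting the cruder pointwise bound $|N(v) \cap H| \leq \d n - 2$ for $v \in H$ (which does hold, since any common $H$-neighbour of $v$ and some $u \in N(v) \cap H$ would form a triangle in $G[H]$, forcing $\deg_G(u) \leq n - |N(v)\cap H|$ and hence the claim). Fed through the same argument, that per-vertex bound produces $(\d n - 2)^2$ rather than $\tfrac12(\d n - 2)^2$---a factor of two too weak; it is Mantel's quadratic saving $e(G[U \cap H]) \leq u_H^{\,2}/4$, together with the completed-square step, that secures the correct constant.
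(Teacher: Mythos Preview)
Your proof is correct and follows the same global architecture as the paper: derive (ii) from (i) with $U=V(G)$, split $U$ into high-degree vertices (those with degree exceeding $\lceil(1-\d)n+1\rceil$) and low-degree vertices, handle the low-degree contribution by the trivial per-vertex bound, and exploit the fact that the high-degree vertices induce a triangle-free subgraph to control their total degree.

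The one substantive difference is in how triangle-freeness is used. The paper picks a vertex $u$ of maximum degree in $U^* = U\cap H$, sets $x=|U^*\setminus N_G(u)|$ and $y=|U^*\cap N_G(u)|$, observes that $N_G(u)\cap U^*$ is an independent set (so those $y$ vertices each have degree at most $n-y$, while all of $U^*$ has degree at most $\deg_G(u)\le n-x$), and then maximises $x(n-x)+y(n-y)+(|U|-x-y)((1-\d)n+2)$ freely over $x,y$. You instead apply Mantel's theorem to $G[U\cap H]$ together with the trivial bipartite bound between $U\cap H$ and $H\setminus U$, obtaining $\sum_{v\in U\cap H}\deg_G(v)\le u_H(n-u_H/2)$ directly, and then complete the square in $u_H$. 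Both routes deliver exactly the same bound on the high-degree contribution; yours is a bit more streamlined since it names the extremal result doing the work, while the paper's pivot-vertex argument is self-contained and avoids citing Mantel. Your closing remark about the cruder pointwise bound $|N(v)\cap H|\le \d n-2$ losing a factor of two is a nice diagnostic but not needed for the proof itself.
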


\begin{proof}
Note that (ii) follows from (i) by letting $U=V(G)$.  We now prove (i). Let $U \subseteq V(G)$ and put $U^*= \{v \in U:\deg_G(v) \geq (1-\d)n+2\}$. Let $u$ be a vertex in $U$
of largest degree in $G$. If $U^*=\emptyset$, then clearly (i) holds, so we may assume $u \in U^*$. Let $x=|U^* \setminus N_G(u)|$. Clearly $\deg_G(u) \leq n-x$ and hence $\deg_G(v) \leq n-x$ for each $v \in U$. Let $y=|U^* \cap N_G(u)|$ and note that $G[U^* \cap N_G(u)]$ is empty because $G$ is $(n,\delta)$-reduced and hence $G[U^*]$ is triangle-free. So the $y$ vertices in $U^* \cap N_G(u)$ each have degree at most $n-y$. The $|U|-x-y$ vertices in $U \setminus U^*$ each have degree less than $(1-\d)n+2$ by the definition of $U^*$. Thus,
\[\medop\sum_{v \in U}\deg_G(v) < x(n-x)+y(n-y)+(|U|-x-y)((1-\d)n+2).\]
The latter expression is maximised when $x=y=\frac{1}{2}(\d n -2)$, and (i) follows.
\end{proof}

\begin{lemma}\label{L:tavBound}
For any $(n,\delta)$-reduced graph $G$,
\[t_\av \leq 3(1-\d)n- \mfrac{2n-3\d n+2}{m}\mbinom{n}{2}.\]
\end{lemma}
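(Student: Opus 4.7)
The plan is to derive the bound from the edge--triangle identity $3|\mathcal{T}(G)| = \sum_{e \in E(G)} t_e$ by rewriting each $t_e$ in terms of the degrees of its endpoints and the number of vertices that miss both endpoints. For an edge $e = uv$, a partition of $V(G) \setminus \{u,v\}$ according to adjacency to $u$ and to $v$ yields $t_e = \deg_G(u) + \deg_G(v) - n + r_e$, where $r_e$ denotes the number of vertices of $V(G) \setminus \{u,v\}$ adjacent to neither $u$ nor $v$. Summing over $e$ gives
\[3|\mathcal{T}(G)| \;=\; \sum_{v \in V(G)} \deg_G(v)^2 \,-\, mn \,+\, \sum_{e \in E(G)} r_e.\]
It then suffices to bound the first and third sums from above in terms of $m$ and $\bar m := \binom{n}{2} - m$, using only the minimum-degree hypothesis.

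For the degree-squared sum, I would use the identity $\sum_v \deg_G(v)^2 = 2m(n-1) - \sum_{\{u,v\} \notin E(G)} (\deg_G(u)+\deg_G(v))$, which comes from expanding $\deg_G(v)(n-1-\deg_G(v))$ and double-counting the contribution of each non-edge at its two endpoints. Since $\deg_G(u)+\deg_G(v) \geq 2(1-\d)n$ for each of the $\bar m$ non-edges by minimum degree, one gets $\sum_v \deg_G(v)^2 \leq 2m(n-1) - 2(1-\d)n\,\bar m$.

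For $\sum_e r_e$, I would swap the order of summation: since the pair $(e,w)$ is counted exactly when $w$ and the two endpoints of $e$ all lie in $V(G) \setminus (N_G(w) \cup \{w\})$, we have $\sum_e r_e = \sum_{w \in V(G)} e(G[V(G) \setminus (N_G(w) \cup \{w\})])$. Write $x_w := n-1-\deg_G(w)$, so that $x_w \leq \d n - 1$ by the minimum-degree assumption and $\sum_w x_w = n(n-1) - 2m = 2\bar m$. The crucial step is to replace the naive bound $\binom{x_w}{2}$ with the linearised estimate $\binom{x_w}{2} \leq \tfrac{1}{2}(\d n - 2)\,x_w$, which is valid because $x_w - 1 \leq \d n - 2$; summing then yields $\sum_e r_e \leq (\d n - 2)\,\bar m$.

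Substituting these two bounds into the identity gives $3|\mathcal{T}(G)| \leq m(n-2) - (2n - 3\d n + 2)\,\bar m$; dividing by $m$ and writing $\bar m/m = \binom{n}{2}/m - 1$ collects the constants into the stated inequality. The main obstacle, in my view, is this bound on $\sum_e r_e$: a direct application of $e(G[V(G) \setminus (N_G(w) \cup \{w\})]) \leq \binom{x_w}{2}$ followed by $x_w \leq \d n - 1$ would give a sum of size $n\binom{\d n - 1}{2}$, which is too large by a factor of order $\d n$; one must exploit $\sum_w x_w = 2\bar m$ by linearising in $x_w$ first. The remaining manipulations are routine.
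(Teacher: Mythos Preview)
Your argument is correct and, once the bookkeeping is unwound, is the same proof as the paper's in different dress. The paper counts triangles of $K_V$ by the number of $G$-edges they contain and works in the complement: from the identities $t_1+3t_0=\sum_v\binom{\deg_{G^c}(v)}{2}$ and $3t_0+2t_1+t_2=m^c(n-2)$ it gets $t_0+t_1+t_2=m^c(n-2)+t_0-\sum_v\binom{\deg_{G^c}(v)}{2}$, then uses $t_0\geq 0$ together with the bound $\sum_v\binom{\deg_{G^c}(v)}{2}\leq m^c(\delta n-2)$, argued by convexity under the constraints $\sum_v\deg_{G^c}(v)=2m^c$ and $\deg_{G^c}(v)\leq \delta n-1$. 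This last estimate is exactly your ``crucial step'' $\binom{x_w}{2}\leq\tfrac{1}{2}(\delta n-2)x_w$ summed over $w$. In your notation $\sum_e r_e$ is precisely the paper's $t_1$; your passage $e(G[\cdot])\leq\binom{x_w}{2}$ is the inequality $t_1\leq t_1+3t_0$; and your separate degree-squared bound via non-edges does the work of the exact identity $3t_0+2t_1+t_2=m^c(n-2)$. Both routes land on the identical inequality $3|\mathcal{T}(G)|\leq m(n-2)-(2n-3\delta n+2)\bar m$, so neither buys anything the other does not.
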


\begin{proof}
Let $V=V(G)$. Let $G^c$ be the complement of $G$ and let $m^c=\binom{n}{2}-m$. For $i \in \{0,1,2,3\}$, let $t_i$ be the number of triangles in $K_V$ that contain exactly $i$ edges of $G$. For a vertex $v \in V$, the number of triangles in $K_V$ that contain two edges incident with $v$ that are not in $G$ is $\binom{\deg_{G^c}(v)}{2}$ and hence
\begin{equation}\label{E:t1Count}
t_1+3t_0=\sum_{v \in V} \mbinom{\deg_{G^c}(v)}{2}.
\end{equation}
For an edge $uv \in E(G^c)$, the number of triangles in $K_V$ that contain $uv$ is $n-2$, and hence
$3t_0+2t_1+t_2=m^c(n-2)$. Thus, using \eqref{E:t1Count},
\begin{equation}\label{E:triComplementCount}
t_0+t_1+t_2=m^c(n-2)+t_0-\sum_{v \in V} \mbinom{\deg_{G^c}(v)}{2}.
\end{equation}

Because $\sum_{v \in V}\deg_{G^c}(v)=2m^c$ and $0 \leq \deg_{G^c}(v) \leq \d n - 1$ for each $v \in V$, $\sum_{v \in V} \binom{\deg_{G^c}(v)}{2}$ is maximised when $\lfloor \frac{2m^c}{\d n -1} \rfloor$ vertices in $G^c$ have degree $\d n-1$ and all but one (or all) other vertices have degree $0$.
Thus $\sum_{v \in V} \binom{\deg_{G^c}(v)}{2} \leq \frac{2m^c}{\d n -1}\binom{\d n -1}{2}=m^c(\d n -2)$. So it follows from \eqref{E:triComplementCount} and $t_0 \geq 0$, that $t_0+t_1+t_2 \geq (1-\d)m^cn$. Thus, using $m^c=\binom{n}{2}-m$,

\[t_\av=\mfrac{3t_3}{m}=\mfrac{3}{m}\left(\mbinom{n}{3}-(t_0+t_1+t_2)\right) \leq \mfrac{3}{m}\left(\mbinom{n}{3}-(1-\d)n\left(\mbinom{n}{2}-m\right)\right).\]
By simplifying this last expression, we obtain the result.
\end{proof}

\section{Low \texorpdfstring{$\bm{t_A}$}{tA} or low/high \texorpdfstring{$\bm{\alpha}$}{alpha}}
\label{S:outer-alpha}

We first give two results which supply bounds on $\kappa_A$. Lemma~\ref{L:dross} is effectively used in \cite{Dro} and Lemma~\ref{L:window} is our own. We then establish some consequences of these bounds for comparison with $\lambda_A$. Lemmas~\ref{L:windowCor}, \ref{L:drossFowardCor} and \ref{L:drossRevCor} show that $\kappa_{A} \geq \lambda_A$ when $t_A$ is not too large, $\alpha$ is small and $\alpha$ is large, respectively.

\begin{lemma}[\cite{Dro}]\label{L:dross}
For any $(n,\delta)$-reduced graph $G$ and subset $S$ of $E(G)$,
\[\kappa_{S} \geq \tfrac{1}{2}|S|t_S(t_S-\d n)-|S|(|S|-1).\]
\end{lemma}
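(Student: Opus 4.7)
My plan is to lower bound $\kappa_S$ by first counting, for each edge in $S$, the rooted pairs containing it, and then correcting for double-counting of rooted pairs lying inside $S$. The key local quantity is the number of edges of $G$ induced on the common-neighbourhood $T_e$ of the endpoints of $e$, since a rooted pair is exactly an edge $e = uv$ together with an edge $xy$ of $G[T_e]$.

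For each edge $e \in S$, let $r_e$ denote the number of edges of $G[T_e]$. I would bound $r_e$ from below as follows: any vertex $w \in T_e$ has at most $\d n - 1$ non-neighbours in $V(G) \setminus \{w\}$ by the minimum degree assumption, so at most $\d n - 1$ of the $t_e - 1$ other vertices of $T_e$ fail to be adjacent to $w$; hence $\deg_{G[T_e]}(w) \geq t_e - \d n$. Summing over $w \in T_e$ and dividing by two,
\[
r_e \geq \tfrac{1}{2} t_e (t_e - \d n).
\]
Summing over $e \in S$ and then applying convexity of $x \mapsto x^2$ (Jensen's inequality on the $t_e$'s, since the linear term $-\d n \cdot t_e$ averages exactly to $-\d n\, t_S$), I obtain
\[
\sum_{e \in S} r_e \;\ge\; \tfrac{1}{2}\sum_{e \in S} t_e(t_e - \d n) \;\ge\; \tfrac{1}{2}|S|\, t_S (t_S - \d n).
\]

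Finally, I would relate $\sum_{e \in S} r_e$ to $\kappa_S$. The sum $\sum_{e \in S} r_e$ counts every rooted pair $\{e_1,e_2\}$ once if it is separated by $S$ and twice if both edges lie in $S$. The number of rooted pairs contained entirely in $S$ is at most $\binom{|S|}{2}$, so
\[
\kappa_S \;\ge\; \sum_{e \in S} r_e - 2\binom{|S|}{2} \;\ge\; \tfrac{1}{2}|S|\, t_S(t_S - \d n) - |S|(|S|-1),
\]
which is the claimed bound. The only place requiring any care is the double-counting correction in the last step (and checking that Jensen is being applied on the correct quantity); the rest is a direct minimum-degree count within each $T_e$, so I do not anticipate a serious obstacle.
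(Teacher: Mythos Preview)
Your proof is correct and essentially identical to the paper's: both bound the edges in $G[T_e]$ by $\tfrac{1}{2}t_e(t_e-\delta n)$ via minimum degree, apply convexity over $e\in S$, and subtract $|S|(|S|-1)$ to correct for pairs lying entirely in $S$. The only cosmetic difference is that the paper subtracts $|S|-1$ locally at each edge (at most $|S|-1$ edges of $G[T_e]$ lie in $S$) before summing, whereas you sum first and subtract $2\binom{|S|}{2}$ globally---these yield the same arithmetic.
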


\begin{proof}
Let $e$ be an edge in $S$. Now, $e$ is in at least $\frac{1}{2}t_e(t_e-\d n)$ copies of $K_4$ because $G[T_e]$ must contain at least this many edges. At most $|S|-1$ edges of $G[T_e]$ can be in $S$, and hence $e$ is an edge of at least $r_e=\frac{1}{2}t_e(t_e-\d n)-|S|+1$ rooted pairs separated by $S$. Taking the sum of $r_e$ over all $e \in S$ and using the convexity of $r_e$ in $t_e$, the result follows.
\end{proof}

Later, we use Lemma~\ref{L:dross} with $S$ taking the role of each of our subsets $A$ and $B$ of $E(G)$.

\begin{lemma}\label{L:window}
For any $(n,\delta)$-reduced graph $G$ and subset $A$ of $E(G)$,
\[\kappa_A \geq \tfrac{1}{2}\alpha(1-\alpha)m(t_A-2\delta n)(t_A-3\delta n).\]
\end{lemma}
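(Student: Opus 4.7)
I plan to lower-bound $\kappa_A = \sum_{e\in A} b_e$, where $b_e := |E(G[T_e]) \cap B|$ denotes the number of $B$-edges in the common neighbourhood of $e$. The structural starting point is that for $e = uv$, since each of $u$ and $v$ has at most $\d n$ non-neighbours in $G$, we have $|V(G) \setminus T_e| = |V(G) \setminus (N(u) \cap N(v))| \leq 2\d n$.

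First, I fix $e = uv \in A$ and $w \in T_e$. Writing $N_B(w)$ for the set of $B$-neighbours of $w$ in $G$, inclusion-exclusion on $N_B(w) \cap T_e$ yields
\[ \deg_{G[T_e], B}(w) \;\geq\; |N_B(w)| - |V(G) \setminus T_e| \;\geq\; d_w^B - 2\d n, \]
where $d_w^B := |N_B(w)|$. Summing over $w \in T_e$ gives $2 b_e \geq D_e^B - 2\d n\, t_e$ with $D_e^B := \sum_{w \in T_e} d_w^B$. Summing over $e \in A$ and swapping the order of summation (with $A_w := |\{e \in A : w \in T_e\}|$, which equals the number of $A$-edges in $G[N(w)]$) yields
\[ 2\kappa_A \;\geq\; \sum_w A_w\, d_w^B - 2\d n\, |A|\, t_A. \]

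Next, I lower-bound $\sum_w A_w d_w^B$ by using $d_w^B = \deg_G(w) - d_w^A \geq (1 - \d)n - d_w^A$, which gives $\sum_w A_w d_w^B \geq (1-\d)n\,|A|\,t_A - \sum_w A_w d_w^A$. Swapping orders in the second sum, $\sum_w A_w d_w^A = \sum_{e\in A}(2 a_e + a_e^{\mathrm{cut}})$, where $a_e^{\mathrm{cut}}$ (the number of $A$-edges between $T_e$ and $V(G) \setminus T_e$) satisfies $a_e^{\mathrm{cut}} \leq t_e(n - t_e) \leq 2\d n\, t_e$. Since $\sum_{e\in A} 2 a_e = 4p$ (with $p$ the number of rooted pairs both in $A$), I obtain $\sum_w A_w d_w^A \leq 4p + 2\d n\,|A|\,t_A$. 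Combining yields $2\kappa_A + 4p \geq (1 - 5\d)\,n\,|A|\,t_A$.

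The main obstacle will be the final algebraic step: assembling this with the Jensen bound $\kappa_A + 2p \geq \tfrac{1}{2}|A|\,t_A(t_A - \d n)$ (from the convexity of $t \mapsto t(t - \d n)$ applied to $\{t_e : e \in A\}$) to extract $\kappa_A$ in the target form $\tfrac{1}{2}\alpha(1-\alpha)m(t_A - 2\d n)(t_A - 3\d n)$. I anticipate that the $(1 - \alpha)$ factor emerges by applying the same argument symmetrically with the roles of $A$ and $B$ reversed (using the identity $\kappa_A = \sum_{f \in B} a_f$) and then combining the two resulting inequalities via $|A| + |B| = m$. The two linear factors $(t_A - 2\d n)$ and $(t_A - 3\d n)$ correspond naturally to two separate applications of the structural bound $|V(G) \setminus T_e| \leq 2\d n$: once in the pointwise inclusion-exclusion, and once in the cut-size bound $a_e^{\mathrm{cut}} \leq 2\d n\, t_e$.
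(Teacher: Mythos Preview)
Your intermediate computations are correct: you validly obtain $\kappa_A + 2p \geq \frac{1}{2}(1-5\d)n\,|A|\,t_A$ and, via convexity, $\kappa_A + 2p \geq \frac{1}{2}|A|\,t_A(t_A - \d n)$. The problem is the assembly step, and it is not merely algebra---it is a structural gap. Every inequality you produce is a \emph{lower} bound on $\kappa_A + 2p$, never on $\kappa_A$ alone. Running the argument with $A$ and $B$ swapped gives lower bounds on $\kappa_A + 2q$ (and in terms of $t_B$, not $t_A$). Adding the two versions yields a lower bound on $\kappa_A + p + q$, which is simply three times the number of $K_4$'s in $G$---a quantity independent of the partition---so nothing about $\kappa_A$ is isolated. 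The only way to strip off $2p$ is an upper bound on $p$, and the only one available is $p \le \binom{|A|}{2}$; plugging that into your convexity inequality reproduces exactly the Dross bound (Lemma~\ref{L:dross}), not the target. In particular, the factor $(1-\alpha)$ in the statement does not arise from any linear combination of your inequalities via $|A|+|B|=m$.

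The paper's argument is genuinely different: it is a dichotomy on an extremal ``window''. One fixes the size $\lceil(1-2\d)n\rceil$ and asks whether \emph{every} vertex set $U$ of that size satisfies $|E(G[U])\cap A| \ge \frac{1}{2}\alpha(t_A-2\d n)(t_A-3\d n)$. If yes, then since each $f\in B$ has $|T_f|\ge\lceil(1-2\d)n\rceil$, every $B$-edge already sees that many $A$-edges in $G[T_f]$, and summing over $|B|=(1-\alpha)m$ edges gives the bound directly. If no, fix a bad $U$; then for each $e\in A$ one has $|T_e\cap U|\ge t_e - 2\d n$, so $G[T_e\cap U]$ has at least $\frac{1}{2}(t_e-2\d n)(t_e-3\d n)$ edges, of which fewer than $\frac{1}{2}\alpha(t_A-2\d n)(t_A-3\d n)$ are in $A$ by choice of $U$---the rest are in $B$. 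Summing over $e\in A$ and using convexity gives $\kappa_A \ge \frac{1}{2}(1-\alpha)|A|(t_A-2\d n)(t_A-3\d n)$. The $(1-\alpha)$ factor thus comes from the threshold chosen in the dichotomy, not from symmetrising between $A$ and $B$.
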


\begin{proof}
Let $B=E(G) \setminus A$. First suppose, for all $U \subseteq V(G)$ with $|U|=\lceil(1-2\d)n\rceil$, that $|E(G[U]) \cap A| \geq \frac{1}{2}\alpha(t_A-2\d n)(t_A-3\d n)$. For each edge $e \in B$, it follows from our supposition that $e$ is in at least $\frac{1}{2}\alpha(t_A-2\d n)(t_A-3\d n)$ rooted pairs separated by $A$ because $t_{e} \geq \lceil(1-2\d)n\rceil$. So the result follows because $|B| = (1-\alpha)m$.

Now suppose that there is a set $U \subseteq V(G)$ with $|U|=\lceil(1-2\d)n\rceil$ such that $|E(G[U]) \cap A| < \frac{1}{2}\alpha(t_A-2\d n)(t_A-3\d n)$. Let $uv$ be an edge in $A$. Then $|T_{e} \cap U| \geq |T_{e}|-2\d n$ and there must be at least $\frac{1}{2}(|T_{e}|-2\d n)(|T_{e}|-3\d n)$ edges in $G[T_{e} \cap U]$. By our definition of $U$, at most $\frac{1}{2}\alpha(t_A-2\d n)(t_A-3\d n)$ of those edges are in $A$ and hence $uv$ is in at least
\[\tfrac{1}{2}(|T_{e}|-2\d n)(|T_{e}|-3\d n)-\tfrac{1}{2}\alpha(t_A-2\d n)(t_A-3\d n)\]
rooted pairs separated by $A$. Noting that the above expression is convex in $|T_{e}|$ we see that, by taking a sum over the edges in $A$, we are guaranteed that $\kappa_A \geq |A| \cdot \tfrac{1}{2}(1-\alpha)(t_A-2\d n)(t_A-3\d n)$, as required.
\end{proof}

\begin{lemma}\label{L:windowCor}
For an $(n,\delta)$-reduced graph $G$ and subset $A$ of $E(G)$, we have $\kappa_A \geq \lambda_A$ if $\d=0.148$ and $t_A \leq 0.7619n$.
\end{lemma}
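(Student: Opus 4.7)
The plan is to derive the inequality by combining the lower bound on $\kappa_A$ from Lemma~\ref{L:window} with the upper bound on $\lambda_A$ from Lemma~\ref{L:reqNumBound}(i), and then verifying a one-variable comparison in $t_A$. Both bounds carry a common factor of $\alpha(1-\alpha)m$ which will cancel, leaving an inequality in $t_A$, $n$ and $\d$ alone.

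First I would dispose of the trivial cases. If $\alpha \in \{0,1\}$ then $\kappa_A = \lambda_A = 0$ and there is nothing to check. If $t_A < (1-2\d)n$, Lemma~\ref{L:reqNumBound}(i) forces $\lambda_A < 0 \le \kappa_A$, so again there is nothing to prove. Hence I may assume $0 < \alpha < 1$ and $(1-2\d)n \le t_A \le 0.7619\,n$. With $\d = 0.148$ this gives $t_A \ge 0.704\,n > 3\d n = 0.444\,n$, so both factors in the lower bound of Lemma~\ref{L:window} are strictly positive.

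Next, weakening $\lceil (1-\d)n - 1 \rceil$ to $(1-\d)n$ as noted after Lemma~\ref{L:reqNumBound} and canceling the common $\alpha(1-\alpha)m$, it suffices to show
$$\tfrac{1}{2}(t_A - 2\d n)(t_A - 3\d n) \ge \tfrac{3}{2}(1-\d)n\,(t_A - (1-2\d)n).$$
Setting $\tau = t_A/n$ and substituting $\d = 0.148$, this rearranges to a convex quadratic inequality in $\tau$ whose smaller root a short computation places just above $0.7619$. Thus the inequality holds throughout $\tau \le 0.7619$, which yields $\kappa_A \ge \lambda_A$ as required.

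The main obstacle here is essentially numerical rather than conceptual: the threshold $0.7619$ has evidently been calibrated to sit just below the smaller root of the reduced quadratic, so there is very little slack and the constants must be tracked precisely through the reduction. In particular, any further weakening of Lemma~\ref{L:window} or Lemma~\ref{L:reqNumBound}(i) before carrying out the cancellation would push the admissible range for $t_A$ noticeably below $0.7619\,n$.
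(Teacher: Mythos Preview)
Your proof is correct and follows essentially the same route as the paper: both combine Lemma~\ref{L:window} with Lemma~\ref{L:reqNumBound}(i), weaken the ceiling to $(1-\d)n$, cancel the common factor $\alpha(1-\alpha)m$, and reduce to checking that the quadratic $(t_A-2\d n)(t_A-3\d n)-3(1-\d)n\bigl(t_A-(1-2\d)n\bigr)$ is nonnegative for $t_A\le 0.7619n$. The only cosmetic difference is that the paper phrases the final check as monotonicity in $t_A$ plus evaluation at the endpoint, whereas you locate the smaller root of the quadratic in $\tau=t_A/n$; these are the same verification.
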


\begin{proof}
Take $\d=0.148$. Applying Lemma~\ref{L:window} together with Lemma~\ref{L:reqNumBound}(i), we have $\kappa_A \geq \lambda_A$  whenever
\begin{equation}\label{E:windowCorEq}
(t_A-2\d n)(t_A-3\d n)-3n(1-\d)(t_A-(1-2\d)n)
\end{equation}
is nonnegative. It can be seen that \eqref{E:windowCorEq} is decreasing in $t_A$ for $t_A \leq n$ and so we obtain a lower bound on it by letting $t_A$ be as large as possible.  Then, for $t_A \le 0.7619n$, the resulting quadratic in $n$ is nonnegative for all positive integers $n$.
\end{proof}

\begin{lemma}\label{L:drossFowardCor}
For any $(n,\delta)$-reduced graph $G$ and subset $A$ of $E(G)$, we have $\kappa_A \geq \lambda_A$ if $\d = 0.148$, $\alpha \leq 0.446$ and $n$ is large.
\end{lemma}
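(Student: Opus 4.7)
I propose to combine Lemma~\ref{L:dross} (applied with $S=A$) with Lemma~\ref{L:reqNumBound}(i). Writing $|A|=\alpha m$ and weakening $\lceil(1-\d)n-1\rceil$ to $(1-\d)n$, Dross's bound gives $\kappa_A \geq \tfrac{1}{2}\alpha m\, t_A(t_A-\d n) - \alpha m(\alpha m - 1)$, while Lemma~\ref{L:reqNumBound}(i) gives $\lambda_A \leq \tfrac{3}{2}\alpha(1-\alpha)m(1-\d)n(t_A-(1-2\d)n)$. If $t_A \leq (1-2\d)n$ then this upper bound on $\lambda_A$ is nonpositive, so $\lambda_A \leq 0 \leq \kappa_A$ and there is nothing to prove. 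Otherwise, dividing the target inequality $\kappa_A \geq \lambda_A$ by the positive quantity $\alpha m$ reduces the task to verifying
\[
q(t_A) := \tfrac{1}{2}t_A(t_A-\d n) - \tfrac{3}{2}(1-\alpha)(1-\d)n\bigl(t_A-(1-2\d)n\bigr) \geq \alpha m - 1.
\]

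The function $q$ is a convex quadratic in $t_A$ with vertex $t_A^\star = \bigl(\tfrac{1}{2}\d + \tfrac{3}{2}(1-\alpha)(1-\d)\bigr)n$. I would minimise $q$ over $t_A \in [(1-2\d)n, n]$; substituting $\d = 0.148$, one finds that $t_A^\star$ lies in this interval precisely when $\alpha \in [0.275, 0.507]$, where the minimum equals $\bigl(\tfrac{3}{2}(1-\alpha)(1-\d)(1-2\d) - \tfrac{1}{2}(\tfrac{1}{2}\d + \tfrac{3}{2}(1-\alpha)(1-\d))^2\bigr)n^2$, while for $\alpha < 0.275$ the minimum is instead $q(n) = (1-\d)(\tfrac{1}{2} - 3\d(1-\alpha))n^2$. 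Invoking Lemma~\ref{L:mBound}(ii) to bound $\alpha m - 1 \leq \tfrac{2-2\d+\d^2}{4}\alpha n^2 + O(n)$, the reduced inequality becomes (up to $O(n)$ slack) a polynomial inequality in the single variable $\alpha$, which I would then check on $[0, 0.275]$ and $[0.275, 0.446]$ by direct calculation.

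The principal obstacle is that the hypothesis $\alpha \leq 0.446$ is essentially sharp for this method: plugging $\alpha = 0.446$, $\d = 0.148$ and $t_A = t_A^\star$ into the reduced inequality, the leading-order coefficients of the two sides agree to within roughly $10^{-4}n^2$. Consequently, the $O(n)$ slack from Lemma~\ref{L:mBound}(ii) (together with the additional $+1$ that Dross's lemma supplies) has to be tracked carefully to guarantee that the inequality still holds once $n$ is taken large enough; this is precisely the role of the ``$n$ large'' hypothesis in the statement. Away from a small neighbourhood of $\alpha = 0.446$ the inequality has comfortable slack, so no other case causes difficulty.
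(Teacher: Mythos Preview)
Your proposal is correct and follows essentially the same route as the paper: both combine Lemma~\ref{L:dross} with $S=A$ and Lemma~\ref{L:reqNumBound}(i) to reduce to a quadratic in $t_A$, locate the same vertex $t_A^\star=\tfrac{1}{2}n(3-3\alpha(1-\d)-2\d)$, split at $\alpha\approx 0.275$ according to whether $t_A^\star>n$, and then apply Lemma~\ref{L:mBound}(ii) to compare leading $n^2$-coefficients. Your remark that the inequality is nearly sharp at $\alpha=0.446$ is exactly what the paper finds: the leading coefficient $-0.02848+0.793336\alpha-1.633284\alpha^2$ is only about $5\times 10^{-4}$ there, which is why the ``$n$ large'' hypothesis is needed.
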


\begin{proof}
Take $\d=0.148$. By Lemma~\ref{L:dross} with $S=A$ and Lemma~\ref{L:reqNumBound}(i), we have that $\kappa_{A} \geq \lambda_A$ whenever
\begin{equation}\label{E:drossFowardCorEq}
t_A(t_A-\d n)-2(\alpha m-1)-3n(1-\alpha)(1-\d)(t_A-(1-2\d)n)
\end{equation}
is nonnegative.

As a quadratic in $t_A$, \eqref{E:drossFowardCorEq} is minimised for $t_A=  \frac{1}{2}n(3 - 3 \alpha(1-\d) - 2 \delta)$.  With $\d=0.148$ and $\alpha \le 0.275$, the critical point occurs when $t_A > n$ and it is easily checked that \eqref{E:drossFowardCorEq} is nonnegative using the boundary condition $t_A \le n$ and the bound on $m$ from Lemma~\ref{L:mBound}(ii).  For the rest of the range of $\alpha$, we use the critical point for $t_A$ and the bound on $m$ from Lemma~\ref{L:mBound}(ii).  With this, \eqref{E:drossFowardCorEq} becomes
a quadratic in $n$ with leading coefficient
$$-0.02848 + 0.793336 \alpha - 1.633284 \alpha^2.$$
This is positive for $0.275 \le \alpha \le 0.446$, and so  $\kappa_{A} \geq \lambda_A$ holds for large $n$.
\end{proof}

\begin{lemma}\label{L:drossRevCor}
For any $(n,\delta)$-reduced graph $G$ and subset $A$ of $E(G)$, we have $\kappa_A \geq \lambda_A$ if $\d = 0.148$, $\alpha \geq 0.692$ and $n$ is large.
\end{lemma}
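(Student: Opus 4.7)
The plan is to mirror the proof of Lemma~\ref{L:drossFowardCor}, exchanging the roles of $A$ and $B := E(G) \setminus A$. The key observation is that $\kappa_A = \kappa_B$: a rooted pair $\{e_1, e_2\}$ has exactly one edge in $A$ if and only if it has exactly one edge in $B$. Applying Lemma~\ref{L:dross} with $S = B$ to bound $\kappa_A = \kappa_B$ from below, and Lemma~\ref{L:reqNumBound}(ii) to bound $\lambda_A$ from above, then dividing by $|B| = (1-\alpha)m$ (we may assume $\alpha < 1$, else $\lambda_A \le 0$), we reduce to showing that
\[
t_B(t_B - \d n) - 2\bigl((1-\alpha)m - 1\bigr) - 3n(1-\d)\bigl(t_{\av} - (1-2\d)n\bigr)
\]
is nonnegative for $\d = 0.148$, $\alpha \geq 0.692$, and all sufficiently large $n$.

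As a quadratic in $t_B$ this expression is minimised at $t_B = \d n/2$, which for $\d = 0.148$ lies far below the a priori lower bound $t_B \geq (1-2\d)n$. So on the feasible range the expression is monotonically increasing in $t_B$, and I substitute the boundary value $t_B = (1-2\d)n$ to obtain the worst case; this is analogous to the $t_A \leq n$ boundary substitution used for small $\alpha$ in Lemma~\ref{L:drossFowardCor}, but here no interior critical point is ever relevant, so this step is actually simpler. Next I upper bound $t_{\av}$ using Lemma~\ref{L:tavBound} and $m$ using Lemma~\ref{L:mBound}(ii); both point in the adversarial direction, since the displayed expression is decreasing in both $m$ and $t_{\av}$, and Lemma~\ref{L:tavBound} itself is maximised by taking the largest possible $m$, so these substitutions are mutually consistent.

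After the substitutions the displayed expression reduces to a quadratic in $n$ whose leading coefficient is an affine function of $\alpha$ of the form $c_1 \alpha - c_2$ for explicit positive constants $c_1, c_2$. The main (and only) obstacle is the numerical verification that this leading coefficient is positive throughout $\alpha \geq 0.692$; indeed, $c_2/c_1 \approx 0.691$, so the threshold $0.692$ is essentially tight for this technique and the constants must be tracked carefully. Once this is confirmed the leading coefficient is bounded away from zero on $\alpha \geq 0.692$, dominating the lower-order $O(n)$ terms for large $n$, and $\kappa_A \geq \lambda_A$ follows.
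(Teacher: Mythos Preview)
Your proof is correct and follows essentially the same approach as the paper: observe $\kappa_A=\kappa_B$, apply Lemma~\ref{L:dross} with $S=B$, substitute the worst-case value $t_B=(1-2\delta)n$ (the paper does this immediately rather than arguing via the quadratic's vertex, but the effect is identical), combine with Lemma~\ref{L:reqNumBound}(ii), and then plug in the bounds from Lemmas~\ref{L:tavBound} and~\ref{L:mBound}(ii) to obtain a quadratic in $n$ with positive leading coefficient for $\alpha\ge 0.692$. The only cosmetic difference is that the paper substitutes $\alpha=0.692$ before $m$, while you keep $\alpha$ free and verify the leading coefficient is affine in $\alpha$ with root near $0.691$; the two are equivalent.
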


\begin{proof}
Let $B=E(G) \setminus A$ and note that $\kappa_A=\kappa_B$. Applying Lemma~\ref{L:dross} with $S=B$ and noting that $t_B \geq (1-2\d)n$ shows that
$\kappa_{A} \geq \frac{1}{2}|B|(1-2\d)(1-3\d)n^2-|B|(|B|-1)$. Using this together with Lemma~\ref{L:reqNumBound}(ii) and $|B|=(1-\alpha)m$, we have $\kappa_A \geq \lambda_A$ whenever
\begin{equation}\label{E:drossBackCorEq}
(1-2\d)(1-3\d)n^2-2((1-\alpha)m-1) - 3n(1-\d)(t_\av-(1-2\d)n)
\end{equation}
is nonnegative.

Take $\d=0.148$. By calculating the sign of the appropriate partial derivative at each stage, noting that $n$ is large, we can obtain a sequence of lower bounds for \eqref{E:drossBackCorEq} by successively substituting: the bound of Lemma~\ref{L:tavBound} for $t_\av$, then $0.692$ for $\alpha$, and finally the bound of Lemma~\ref{L:mBound}(ii) for $m$. The leading term of the resulting quadratic in $n$ can be seen to be positive.
\end{proof}

\section{High \texorpdfstring{$\bm{t_A}$}{tA} and middle \texorpdfstring{$\bm{\alpha}$}{alpha}}
\label{S:middle-alpha}

In this section we deal with the cases not covered by the previous section, that is, cases where $0.446 < \alpha < 0.692$ and $t_A > 0.7619n$. Throughout this section we assume that $n$ is large and all asymptotic notation is with respect to this. For a graph $G$ and subset $A$ of $E(G)$, let $E_A$ and $e_A$ be the functions with domain $V(G)$ such that $E_A(u)$ is the set of all edges of $G[N_G(u)]$ that are in $A$, and $e_A(u)=|E_A(u)|$.

\begin{lemma}\label{L:edgeNeighProperties}
Let $G$ be an $(n,\d)$-reduced graph and $A$ be a subset of $E(G)$.
\begin{itemize}
    \item[(i)]
For each $u \in V(G)$, $\frac{1}{2}(1-\d)(1-2\d)n^2-(1-\alpha)m \leq e_A(u) \leq \alpha m$.
    \item[(ii)]
$\sum_{u \in V(G)}e_A(u)=\alpha m t_A$.
    \item[(iii)]
For each $U \subseteq V(G)$,
\[\sum_{u \in U}e_A(u) \leq \alpha m |U|-\tfrac{1}{2}\d n\sqrt{2\alpha m}\left(|U|-(1+\tfrac{1}{2}\d) n+\sqrt{2\alpha m}\right)+q(\alpha,\delta,m,n),\]
where $q$ is $O(n^2)$.
\end{itemize}
\end{lemma}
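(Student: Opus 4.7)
Parts (i) and (ii) are immediate. For (i), the upper bound is $|E_A(u)| \le |A| = \alpha m$. For the lower bound, every $v \in N_G(u)$ satisfies $|N_G(v) \cap N_G(u)| \ge (1-2\delta)n$ by the minimum degree condition, so a degree-sum gives $|E(G[N_G(u)])| \ge \tfrac{1}{2}(1-\delta)(1-2\delta)n^2$. Writing $B = E(G)\setminus A$, the identity $e_A(u) = |E(G[N_G(u)])| - |E(G[N_G(u)])\cap B|$ combined with $|E(G[N_G(u)])\cap B| \le |B| = (1-\alpha)m$ completes the bound. Part (ii) is a straightforward double count: the pair $(u,e)$ is counted on the left precisely when $u \in T_e$ and $e\in A$, so $\sum_{u\in V(G)} e_A(u) = \sum_{e\in A}|T_e| = |A|t_A = \alpha m t_A$.

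For (iii), I would introduce $V_A = \{v \in V(G) : \deg_A(v) > 0\}$ and write $k = |V_A|$. Since every $A$-edge has both endpoints in $V_A$, $\binom{k}{2} \ge |A| = \alpha m$, so $k \ge \sqrt{2\alpha m}$. For $u \in V(G)$ set $S(u) = V_A \setminus N_G(u)$; the minimum degree condition gives $|S(u)| \le \delta n$. Now $\alpha m - e_A(u)$ counts the $A$-edges with at least one endpoint in $S(u)$, and a short inclusion-exclusion on the multi-set $\bigcup_{v \in S(u)} \{e \in A : v \in e\}$ gives
\[\alpha m - e_A(u) \;\ge\; \sum_{v\in S(u)}\deg_A(v) \;-\; \binom{|S(u)|}{2}.\]
Summing over $u \in U$ and swapping the order of summation,
\[\alpha m\,|U| - \sum_{u\in U}e_A(u) \;\ge\; \sum_{v\in V_A}\deg_A(v)\cdot|U\setminus N_G(v)| \;-\; \sum_{u\in U}\binom{|S(u)|}{2}.\]

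What remains is an estimation exercise. The double sum $\sum_u \binom{|S(u)|}{2}$ is absorbed into the $q$ term, using $|S(u)| \le \delta n$ and $\sum_{u\in U}|S(u)| = \sum_{v\in V_A}|U\setminus N_G(v)| \le k\delta n$ to keep its contribution $O(n^2)$. The leading contribution $\tfrac{1}{2}\delta n\sqrt{2\alpha m}\bigl(|U| - (1+\tfrac{\delta}{2})n + \sqrt{2\alpha m}\bigr)$ must come from the weighted sum $\sum_{v\in V_A}\deg_A(v)\cdot|U\setminus N_G(v)|$, combining $\sum_v \deg_A(v) = 2\alpha m$ with an estimate on $\sum_{v\in V_A}|U\setminus N_G(v)|$ obtained from Lemma~\ref{L:mBound}(i) applied with $U' = V_A$. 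The main obstacle is realising the precise coefficient $\tfrac{1}{2}\delta n\sqrt{2\alpha m}$ and the threshold $(1+\tfrac{\delta}{2})n - \sqrt{2\alpha m}$; I would approach it via a convexity / Chebyshev-style argument that pairs the $\deg_A$-weighting with the $|U\setminus N_G(v)|$ profile, or equivalently, by reverting to the cleaner bound $e_A(u) \le \min\bigl(\alpha m,\binom{k-|S(u)|}{2}\bigr)$ and carrying out the extremal optimisation over $|S(u)|$-profiles subject to the degree-sum constraint from Lemma~\ref{L:mBound}(i), absorbing any residual into $q$.
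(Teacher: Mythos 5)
Parts (i) and (ii) match the paper's proof exactly: the edge count in $G[N_G(u)]$, the $B$-cap, and the double count of pairs $(u,e)$ with $u\in T_e$, $e\in A$.

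Part (iii) is where the proposal goes wrong. Your inclusion--exclusion bound
\[\alpha m - e_A(u) \ge \sum_{v\in S(u)}\deg_A(v) - \binom{|S(u)|}{2}\]
is correct, but the claim that $\sum_{u\in U}\binom{|S(u)|}{2}$ can be absorbed into the $O(n^2)$ error term $q$ is false. Each $|S(u)|$ can be as large as $\delta n$, and $|U|$ can be as large as $n$, so this sum is generically $\Theta(n^3)$ — the same order as the leading terms $\alpha m|U|$ and $\tfrac{1}{2}\delta n\sqrt{2\alpha m}(\ldots)$ in the target inequality. In fact for $\delta = 0.148$ and $\alpha$ around $0.5$ this correction is a non-trivial fraction ($\approx \delta/2\alpha$) of the positive weighted sum you want to keep, so it cannot be discarded. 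The constraint $\sum_{u\in U}|S(u)|\le k\delta n$ does not rescue this: the maximiser of $\sum\binom{s_u}{2}$ under $\sum s_u\le k\delta n$, $s_u\le\delta n$ puts $\approx k$ of the $s_u$ at $\delta n$, giving $\Theta(kn^2) = \Theta(n^3)$. Beyond this, the concluding extremal optimisation that is supposed to produce the specific coefficients $\tfrac{1}{2}\delta n\sqrt{2\alpha m}$ and $(1+\tfrac{\delta}{2})n - \sqrt{2\alpha m}$ is left unresolved; you flag it as "the main obstacle" but do not carry it out, so the proof is incomplete even setting aside the order-of-magnitude error.

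The paper avoids the $\binom{|S(u)|}{2}$ correction entirely by a different bookkeeping: it orders the vertices $v_1,\dots,v_n$ by $|N_{G^c}(v_i)\cap U|$, defines $z$ as the count of triples $(u,v_i,v_j)$ with $j<i$, $uv_i\in E(G^c)$, $v_iv_j\in A$, and observes $\sum_{u\in U}e_A(u)\le \alpha m|U|-z$. Because each $A$-edge is charged only through its \emph{higher-indexed} endpoint, there is no over-counting to correct, and no quadratic error term. The paper then writes $z=\sum_i a_i|N_{G^c}(v_i)\cap U|$ with $a_i=|\{j<i:v_iv_j\in A\}|$, and performs a two-stage extremal argument: first minimise over the $a_i$ (subject to $\sum a_i=\alpha m$, which by the chosen ordering concentrates the $a_i$ on a prefix of length $r\approx\sqrt{2\alpha m}$), then minimise over the $|N_{G^c}(v_i)\cap U|$ subject to their sum being $\ge \delta n(|U|-\tfrac{1}{2}\delta n)+O(n)$ (Lemma~\ref{L:mBound}(i)) and each being $\le\delta n$. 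This is what produces the $\sqrt{2\alpha m}$ factor and the threshold $(1+\tfrac{1}{2}\delta)n-\sqrt{2\alpha m}$; your sketch gestures at a "convexity / Chebyshev" pairing but never pins these down. To salvage your route you would need to keep $\sum\binom{|S(u)|}{2}$ and estimate it together with the weighted sum — at which point you are essentially forced into a bookkeeping similar to the paper's anyway.
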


\begin{proof} We prove (i), (ii) and (iii) separately.

\noindent{\bf (i).}
Let $u \in V(G)$. Clearly $e_A(u) \leq \alpha m$ because $E_A(u) \subseteq A$. There are at least $\frac{1}{2}(1-\d)(1-2\d)n^2$ edges in $G[N_G(u)]$ and at most $|B|=(1-\alpha)m$ of these are in $B$. The remainder must be in $E_A(u)$ and hence $e_A(u) \geq \frac{1}{2}(1-\d)(1-2\d)n^2-(1-\alpha)m$.

\noindent{\bf (ii).}
Each edge $e$ in $A$ is counted $t_{e}$ times in the sum $\sum_{u \in V(G)}e_A(u)$. So, by the definition of $t_A$, we have $\sum_{u \in V(G)}e_A(u)=\alpha m t_A$.

\noindent{\bf (iii).}
Let $G^c$ be the complement of $G$ and let $V(G)=\{v_1,\ldots,v_n\}$ where $|N_{G^c}(v_1) \cap U| \leq \cdots \leq |N_{G^c}(v_n) \cap U|$. Note that $\sum_{u \in U}e_A(u) \leq \alpha m |U| - z$, where
\begin{align}
  z &= |\{(u,v_i,v_j): \mbox{$u \in U$, $1 \leq j < i \leq n$, $uv_i \in E(G^c)$, $v_iv_j \in A$}\}| \nonumber\\
  &= \medop\sum_{i=1}^n a_i|N_{G^c}(v_i) \cap U|, \label{E:phiEq}
\end{align}
where $a_i=|\{j \in \{1,\ldots,i-1\}:v_iv_j \in A\}|$ for each $i \in \{1,\ldots,n\}$. We will prove (iii) by establishing that
\begin{equation}\label{E:desiredPhiBound}
z \geq \tfrac{1}{2}\d n\sqrt{2\alpha m}\left(|U|-(1+\tfrac{1}{2}\d) n+\sqrt{2\alpha m}\right)-q(\alpha,\delta,m,n),
\end{equation}
where $q$ is $O(n^2)$.  To set up the proof of \eqref{E:desiredPhiBound}, let $r$ be the greatest integer such that $\binom{r}{2} \leq \alpha m$. Because of our indexing $\{v_1,\ldots,v_n\}$, and subject to $\sum_{i=1}^n a_i = \alpha m$, \eqref{E:phiEq} is minimised when $a_i=i-1$ for each $i \in \{1,\ldots,r\}$ and $a_{r+1}=\alpha m-\tbinom{r}{2}$. Thus, from \eqref{E:phiEq}, we have
\begin{equation}\label{E:phiBound}
z \geq \medop\sum_{i=1}^r(i-1)|N_{G^c}(v_i) \cap U|.
\end{equation}

Now, $\sum_{i=1}^n|N_{G^c}(v_i) \cap U| = \sum_{u \in U}\deg_{G^c}(u)$ and hence, by Lemma~\ref{L:mBound}(i), $\sum_{i=1}^n|N_{G^c}(v_i) \cap U| \geq \d n(|U|-\tfrac{1}{2} \d n)+O(n)$. Also, $|N_{G^c}(v_i) \cap U| \leq \d n$ for each $i \in \{1,\ldots,n\}$.  Subject to these facts, recalling our indexing of $\{v_1,\ldots,v_n\}$, the bound of \eqref{E:phiBound} is minimised when $|N_{G^c}(v_i) \cap U| = \d n$ for $i \in \{r+1,\ldots,n\}$ and
\[|N_{G^c}(v_i) \cap U| = \mfrac{\d n}{r}\big(|U|-\tfrac{1}{2}\d n-n+r+O(1)\big)\]
for $i \in \{1,\ldots,r\}$.
Given this, using the fact that $r=\sqrt{2 \alpha m}+O(1)$ and $|U| \le n$, it can be seen that \eqref{E:desiredPhiBound} follows from  \eqref{E:phiBound}.\qedhere
\end{proof}

\begin{lemma}\label{L:edgeNeighIntersection}
Let $G$ be an $(n,\delta)$-reduced graph, let $A$ be a subset of $E(G)$, and let $B=E(G) \setminus A$.
\begin{itemize}
    \item[(i)]
For each edge $uv \in B$, then there are at least $e_A(u)+e_A(v)-\alpha m$ rooted pairs separated by $A$ that contain $uv$.
    \item[(ii)]
$\displaystyle{\kappa_A \geq \sum_{uv \in B}\big(e_A(u)+e_A(v)\big)-\alpha(1-\alpha)m^2.}$
\end{itemize}
\end{lemma}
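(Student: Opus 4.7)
The plan is to identify, for a fixed edge $uv\in B$, the rooted pairs through $uv$ separated by $A$ with a concrete set of edges in $A$, and then apply a simple inclusion-exclusion estimate.

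First I would prove (i) as follows. A rooted pair containing $uv$ has the form $\{uv,xy\}$ where $\{u,v,x,y\}$ induces a $K_4$ in $G$; equivalently, $x,y\in T_{uv}$ and $xy\in E(G)$. Since $uv\in B$, such a rooted pair is separated by $A$ precisely when $xy\in A$. So the edges $xy$ giving rise to rooted pairs through $uv$ separated by $A$ are exactly the edges of $G[T_{uv}]$ that lie in $A$, namely the elements of $E_A(u)\cap E_A(v)$ (because $T_{uv}=N_G(u)\cap N_G(v)$ and an edge of $G[N_G(u)]\cap G[N_G(v)]$ is an edge of $G[T_{uv}]$). By inclusion-exclusion together with $E_A(u)\cup E_A(v)\subseteq A$,
\[
|E_A(u)\cap E_A(v)|\;\ge\; e_A(u)+e_A(v)-|E_A(u)\cup E_A(v)|\;\ge\; e_A(u)+e_A(v)-\alpha m,
\]
which yields (i).

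For (ii) I would sum the bound in (i) over all $uv\in B$. The key observation is that any rooted pair separated by $A$ contains exactly one edge of $A$ and exactly one edge of $B$, so when summing over $uv\in B$ each such rooted pair is counted exactly once. Using $|B|=(1-\alpha)m$, this gives
\[
\kappa_A\;\ge\;\sum_{uv\in B}\bigl(e_A(u)+e_A(v)-\alpha m\bigr)\;=\;\sum_{uv\in B}\bigl(e_A(u)+e_A(v)\bigr)-\alpha(1-\alpha)m^2,
\]
as required.

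There is really no substantial obstacle here: once one notices the identification of rooted pairs through $uv$ with elements of $E_A(u)\cap E_A(v)$, both parts are immediate, and the only mild point to watch is that each separated rooted pair has a unique $B$-edge so the sum in (ii) does not double count.
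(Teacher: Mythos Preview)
Your proof is correct and follows essentially the same argument as the paper: identify the separated rooted pairs through $uv\in B$ with $E_A(u)\cap E_A(v)$, apply inclusion--exclusion using $|A|=\alpha m$, and then sum over $B$ with $|B|=(1-\alpha)m$. Your explicit remark that each separated rooted pair has a unique $B$-edge (so the sum in (ii) counts each once) is a welcome clarification that the paper leaves implicit.
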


\begin{proof}
We first prove (i). Let $uv \in B$. The set of edges of $G[T_{uv}]$ in $A$ is $E_A(u) \cap E_A(v)$, and so $|E_A(u) \cap E_A(v)|$ is the number of rooted pairs separated by $A$ that contain $uv$. By inclusion-exclusion $|E_A(u) \cap E_A(v)| \geq e_A(u)+e_A(v)-\alpha m$ because $|A|=\alpha m$. So (i) holds. By applying (i) to each edge in $B$, and recalling that $|B|=(1-\alpha)m$, we obtain (ii).
\end{proof}

\begin{lemma}\label{L:worstCaseEValues}
Let $G$ be an $(n,\delta)$-reduced graph on vertex set $V$, let $A$ be a subset of $E(G)$ and let $B=E(G) \setminus A$. Then, for $0 < \d \leq \frac{1}{4}$, $2\d+\frac{1}{2}\d^2 < \alpha <1$, and $n$ sufficiently large,
\[\kappa_A \geq \sum_{uv \in B}\big(f^\dag(u)+f^\dag(v)\big)-\alpha(1-\alpha)m^2\]
where $f^\dag: V \rightarrow \R$ is a function such that $|\{v \in V:f^\dag(v)=e_i\}|=n_i+O(1)$ for $i \in \{1,2,3\}$, and
\begin{itemize}[itemsep=0mm]
    \item
$n_2=(1+\tfrac{1}{2}\d) n-\sqrt{2\alpha m}$ and $e_2=\alpha m$;
    \item
$e_1=\alpha m-\tfrac{1}{2}\d n\sqrt{2\alpha m}$;
    \item
$e_0=\frac{1}{2}(1-\d)(1-2\d)n^2-(1-\alpha)m$;
    \item
$n_0=\frac{1}{e_1-e_0}\big((n-n_2)e_1-\alpha m(t_A-n_2)\big)$ and $n_1=n-n_0-n_2$.
\end{itemize}
Moreover, $0 < e_0 < e_1 < e_2$ and $2e_1 > e_0+e_2$.
\end{lemma}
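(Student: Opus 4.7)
The plan is to apply Lemma~\ref{L:edgeNeighIntersection}(ii), which gives
\[\kappa_A \geq \medop\sum_{uv \in B}\bigl(e_A(u)+e_A(v)\bigr) - \alpha(1-\alpha)m^2,\]
and then to construct $f^\dag$ as the minimiser of a linear program for which $e_A$ is feasible. Writing $\sum_{uv \in B}(e_A(u)+e_A(v)) = \sum_{u \in V}\deg_B(u) e_A(u)$, my goal reduces to bounding from below the minimum of $\sum_{u \in V}\deg_B(u) x_u$ over all families $(x_u)_{u \in V}$ satisfying conditions (i)--(iii) of Lemma~\ref{L:edgeNeighProperties}, and then defining $f^\dag(u)=x_u$ at a minimiser.

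The first step is a rearrangement argument. Since conditions (i)--(iii) depend only on the multiset $\{x_u\}$ and not on the vertex labels, the minimum above is unchanged if we restrict to labellings in which $x_u$ is anti-monotone in $\deg_B(u)$. The remaining problem is a one-dimensional LP over multisets: minimise the bilinear pairing of a decreasing $\deg_B$-sequence with an increasing sequence $x_1 \leq x_2 \leq \cdots$ constrained by the pointwise bounds $e_0 \leq x_i \leq e_2$ from (i), the total-sum equation $\sum_i x_i = \alpha m t_A$ from (ii), and the family of top-sum inequalities from (iii).

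A standard extreme-point analysis then identifies the minimiser. Viewed as a function of the top-sum size $k$, the bound in (iii) equals $\alpha m k + q(\alpha,\d,m,n)$ exactly at $k = n_2 = (1+\tfrac{1}{2}\d)n - \sqrt{2\alpha m}$, so this is the first constraint to become active and it forces $n_2+O(1)$ of the $x_i$ to take the maximum value $e_2=\alpha m$. The incremental slope of the partial-sum bound past $k=n_2$ equals $e_1 = \alpha m - \tfrac{1}{2}\d n\sqrt{2\alpha m}$, so the next group of coordinates is capped at $e_1$, and the remaining ones are driven to $e_0$. The total-sum equation then fixes the split between the $e_1$ and $e_0$ groups, giving $n_0 = \frac{1}{e_1-e_0}\bigl((n-n_2)e_1 - \alpha m(t_A-n_2)\bigr)$ and $n_1 = n-n_0-n_2$ as in the statement, up to the $O(1)$ rounding that an $O(n^2)$ error term $q$ allows.

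Finally I would verify the four strict inequalities. Positivity of $e_0$ and the ordering $e_1<e_2$ both use the hypothesis $\alpha > 2\d + \tfrac{1}{2}\d^2$ together with the upper bound on $m$ from Lemma~\ref{L:mBound}(ii); the ordering $e_0<e_1$ and the convexity-like inequality $2e_1 > e_0+e_2$ are explicit polynomial inequalities in $\d$, $\alpha$, $m$ and $n$ that can be checked from the same standing bounds for $\d \leq \tfrac{1}{4}$ and $n$ large. The main obstacle will be the LP step: justifying rigorously that the minimiser is three-valued up to $O(1)$ exceptions, confirming that the constraint (iii) is indeed first binding at $|U|=n_2$, and tracking how the $O(n^2)$ error $q$ propagates into only $O(1)$ perturbations of the multiplicities $n_i$.
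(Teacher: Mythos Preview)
Your proposal is correct and follows essentially the same route as the paper: apply Lemma~\ref{L:edgeNeighIntersection}(ii), rewrite the sum as $\sum_{u}\deg_B(u)e_A(u)$, and minimise this linear functional over all sequences satisfying the constraints of Lemma~\ref{L:edgeNeighProperties}, arriving at the three-level structure $(e_2,e_1,e_0)$ with multiplicities $(n_2,n_1,n_0)$. The only notable difference is procedural: where you invoke a rearrangement argument followed by ``standard extreme-point analysis'', the paper fixes the $\deg_B$-ordering up front, relaxes (iii) to initial segments only, and pins down the minimiser via an explicit $\epsilon$-perturbation between consecutive coordinates together with a lexicographic tie-break---but the resulting $f^\dag$ is the same.
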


\begin{proof}
We first show that $0 < e_0 < e_1 < e_2$ and $2e_1 > e_0+e_2$. Obviously $e_1 < e_2$ because $\alpha$ and $\delta$ are positive, and $e_0>0$ using the bound of Lemma~\ref{L:mBound}(ii), $\alpha > 2\d+\frac{1}{2}\d^2$ and the fact that $n$ is large. Because $e_1 < e_2$, showing that $2e_1 > e_0+e_2$ will also establish that $e_0 < e_1$. Routine manipulation shows that $2e_1 > e_0+e_2$ provided that
\[m-\d n\sqrt{2\alpha m}- \tfrac{1}{2}(1-\d)(1-2\d)n^2\]
is positive. This can be seen to be the case by considering the expression as a quadratic in $\sqrt{m}$ and noting $0 < \d \leq \frac{1}{4}$.

For each $v \in V(G)$, let $\deg_B(v)$ denote the number of edges in $B$ that are incident with $v$. Let $V=\{v_1,\ldots,v_n\}$ where $\deg_{B}(v_1) \leq \cdots \leq \deg_{B}(v_n)$. Let $\mathcal{E}$ be the set of all functions $f$ from $V$ to $\R$ that obey the following conditions:
\begin{itemize}[itemsep=0mm]
    \item[(i)]
$e_0 \leq f(v_i) \leq e_2$ for each $i \in \{1,\ldots,n\}$;
    \item[(ii)]
$\sum_{i=1}^nf(v_i)=e_2t_A$;
    \item[(iii)]
$\sum_{i=1}^s f(v_i) \leq se_2-\tfrac{1}{2}\d n\sqrt{2e_2}\left(s-(1+\tfrac{1}{2}\d) n+\sqrt{2e_2}\right)+q(\alpha,\delta,m,n)$ for each $s \in \{1,\ldots,n\}$,
where $q$ is the $O(n^2)$ function given in Lemma~\ref{L:edgeNeighProperties}(iii).
\end{itemize}
Note that $e_A \in \mathcal{E}$ by Lemma~\ref{L:edgeNeighProperties}. For any function $f \in \mathcal{E}$, let
\[\sigma(f)=\sum_{u \in V}f(u)\deg_{B}(u)-\alpha(1-\alpha)m^2.\]
Let $\sigma_{\min}$ be the minimum value of $\sigma(f)$ over all functions $f \in \mathcal{E}$. Note that $\kappa_A \geq \sigma(e_A) \geq \sigma_{\min}$ by Lemma~\ref{L:edgeNeighIntersection}(ii) and the definition of $\sigma_{\min}$. Let $f^\dag$ be a function in $\mathcal{E}$ such that
\begin{itemize}[itemsep=0mm]
    \item[(a)]
$\sigma(f^\dag)=\sigma_{\min}$;
    \item[(b)]
of all the functions in $\mathcal{E}$ obeying (a), $f^\dag$ is one for which the tuple $(f^\dag(v_1),\ldots,f^\dag(v_n))$ is lexicographically maximal.
\end{itemize}
Because $\kappa_A \geq \sigma_{\min}=\sigma(f^\dag)$, to prove the lemma it only remains to show that $|(f^\dag)^{-1}(e_i)|=n_i+O(1)$ for $i \in \{1,2,3\}$.

Let $\epsilon$ be an arbitrarily small positive real number and let $k$ be an arbitrary element of $\{1,\ldots,n-1\}$. Let $f^\ddag_{k}$ be the function from $V$ to $\R$ such that $f^\ddag_{k}(v_{k})=f^\dag(v_{k})+\epsilon$, $f^\ddag_{k}(v_{k+1})=f^\dag(v_{k+1})-\epsilon$ and $f^\ddag_{k}(v_i)=f^\dag(v_i)$ for each $i \in \{1,\ldots,n\}\setminus\{k,k+1\}$. Note that $\sigma(f^\ddag_{k})= \sigma(f^\dag)-\epsilon(\deg_{B}(v_{k+1})-\deg_{B}(v_{k}))$ and hence $\sigma(f^\ddag_{k}) \leq \sigma(f^\dag)$ by our indexing of $\{v_1,\ldots,v_n\}$. Thus, $f^\ddag_{k}$ cannot be in $\mathcal{E}$, for otherwise it would violate either (a) or (b) of the definition of $f^\dag$. Since $f^\ddag_{k}$ clearly obeys (ii), $f^\ddag_{k}$ must violate (i) or (iii).

From the previous paragraph we can make the key observation that, for any $i \in \{1,\ldots,n-1\}$, either $f^\dag(v_{i+1})=e_0$ or  $f^\dag(v_i)=h_i$ where $h_i$ is the minimum of $e_2$ and the bound on $f^\dag(v_i)$ implied by (iii) with $s=i$ given the values of $f^\dag(v_1),\ldots,f^\dag(v_{i-1})$. Let $x$ be the unique integer such that the bound of (iii) is at least $se_2$ for $s \leq  x$, but is less than $se_2$ for $s \geq  x+1$. Note that $x=n_2+O(1)$. Then $h_i=e_2$ for $i \in \{1,\ldots,x\}$, $e_1 \leq h_i \leq e_2$ for $i =x+1$, and $h_i=e_1$ for $i \in \{x+2,\ldots,n\}$. Let $y$ be the smallest element of $\{1,\ldots,n\}$ such that $f^\dag(v_{y})=e_0$ (or let $y=n+1$ if no such integer exists). By inductively applying our observation with $i=y,\ldots,n-1$, noting that $e_0 < e_1 \leq h_i$, we can conclude that  $f^\dag(v_{i})=e_0$ for each $i \in \{y,\ldots,n\}$. Next, inductively applying our observation with $i=1,\ldots,y-2$, we can conclude that $f^\dag(v_{i})=h_i$ for each $i \in \{1,\ldots,y-2\}$ and hence, by our comments on $h_i$, that $f^\dag(v_{i})=e_2$ for $i \in \{1,\ldots,x\}$ and $f^\dag(v_{i})=e_1$ for $i \in \{x+2,\ldots,y-2\}$.

So we have established that $f^\dag$ maps $x$ vertices to $e_2$, $n-y+1$ vertices to $e_0$ and all but at most two of the remaining vertices to $e_1$. Thus, given that $x=n_2+O(1)$, it follows from the fact that $f^\dag$ obeys (ii) that
\[(n-y)e_0+(y-n_2)e_1+n_2e_2=e_2t_A+O(n^2).\]
So we can calculate that $n-y=n_0+O(1)$ and $y-x=n_1+O(1)$. This completes the proof.
\end{proof}

We remark that $n_0$ is a rational expression in $n,\delta,t_A$ and $\sqrt{\alpha m}$.  Moreover, since $e_0 < e_1$, the expression is defined and smooth in the parameters.

\begin{lemma}
\label{L:middleAlpha}
For any $(n,\delta)$-reduced graph $G$ and subset $A$ of $E(G)$, we have $\kappa_A \geq \lambda_A$ if $\d=0.148$, $0.446 \le \alpha \le 0.692$, $t_A \ge 0.7619n$, and $n$ is sufficiently large.
\end{lemma}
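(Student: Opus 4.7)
The plan is to apply Lemma~\ref{L:edgeNeighIntersection}(ii) in conjunction with Lemma~\ref{L:worstCaseEValues} to reduce the task to bounding $\sum_{u \in V(G)} f^\dag(u)\deg_B(u)$ from below, and then to minimise this sum over all feasible degree sequences $(\deg_B(v_1),\ldots,\deg_B(v_n))$. Here $\deg_B(u)$ denotes the number of $B$-edges incident with $u$, and the two lemmas combine to give
\[
\kappa_A \geq \sum_{u \in V(G)} f^\dag(u)\deg_B(u) - \alpha(1-\alpha)m^2,
\]
where $f^\dag$ takes values $e_0 < e_1 < e_2$ on vertex sets of sizes $n_0+O(1)$, $n_1+O(1)$, $n_2+O(1)$. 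By the indexing of Lemma~\ref{L:worstCaseEValues}, the vertices are already ordered so that $f^\dag$ is non-increasing while $\deg_B$ is non-decreasing along $v_1,\ldots,v_n$.

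First I would minimise $\sum_u f^\dag(u)\deg_B(u)$ over all non-decreasing sequences $\deg_B$ with $\sum_u \deg_B(u)=2(1-\alpha)m$ and $0\le \deg_B(u)\le n-1$. A straightforward exchange argument shows that, since $f^\dag$ is a non-increasing step function, the minimum is attained by the ``bang-bang'' sequence which places $\deg_B = n-1$ on the highest-indexed $k = \lceil 2(1-\alpha)m/(n-1)\rceil$ vertices and $\deg_B = 0$ on the remainder: any transfer of mass from a higher to a lower index decreases the product by a nonpositive amount. Using Lemma~\ref{L:mBound}(ii) one checks that, for $\delta = 0.148$ and $\alpha$ in the stated range, $n_0 < k < n_0+n_1$, so this minimum equals $(n-1)\bigl(n_0 e_0 + (k-n_0) e_1\bigr) + O(n^3)$. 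Substituting the explicit formulas for $e_0, e_1, n_0, n_2$ from Lemma~\ref{L:worstCaseEValues}, invoking the upper bound on $m$ from Lemma~\ref{L:mBound}(ii) wherever it weakens the inequality, and comparing with the upper bound on $\lambda_A$ supplied by Lemma~\ref{L:reqNumBound}(i), the desired inequality $\kappa_A \geq \lambda_A$ reduces, for large $n$, to the nonnegativity of an explicit function of $\alpha$ and $\tau := t_A/n$ on the rectangle $[0.446,0.692]\times[0.7619,1]$.

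The hard part will be this final two-dimensional verification. The square root $\sqrt{2\alpha m}$ appearing in $n_2$ and $e_1$ makes the expression non-polynomial in $\alpha$, and spot-calculations suggest the margin is narrow across much of the rectangle, so a crude reduction to the corners is unlikely to suffice. My approach would be to compute partial derivatives to reduce to the boundary in each variable (following the pattern of Lemmas~\ref{L:drossFowardCor} and \ref{L:drossRevCor}) and then to verify the resulting one-dimensional inequalities either analytically or via careful interval arithmetic with explicit error estimates. A secondary short argument would be needed to handle any portion of the range where the nominal $n_0$ from Lemma~\ref{L:worstCaseEValues} becomes non-positive, since in that regime the worst-case $f^\dag$ degenerates to a function taking only the two values $e_1$ and $e_2$; a separate but structurally analogous bang-bang minimisation should dispatch this case.
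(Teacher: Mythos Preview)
Your reduction to minimising $\sum_{u} f^\dag(u)\deg_B(u)$ over degree sequences is valid as a relaxation, but it is strictly weaker than the relaxation the paper uses, and by enough to make the final inequality fail. The paper instead minimises $\sum_{uv\in B}(f^\dag(u)+f^\dag(v))$ over all $b$-subsets of $\binom{V}{2}$, which amounts to placing the $b$ edges of $B$ greedily on the pairs with the smallest value of $f^\dag(u)+f^\dag(v)$: first inside $V_0$, then across $V_0\times V_1$, then $V_0\times V_2$, and only then elsewhere. Every $b$-subset of pairs yields a feasible degree sequence in your sense, so your bang-bang minimum is always at most the paper's subset minimum; in the regimes the paper calls $g_2$ and $g_3$ it is strictly smaller by a margin of order $(e_1-e_0)\bigl(n_0(n-\tfrac12 n_0)-b\bigr)$.

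Concretely, take $\delta=0.148$, $\alpha=0.55$, $\tau=t_A/n=0.78$ and $m\approx 0.4285n^2$. One computes (to leading order) $e_0\approx 0.107n^2$, $e_1\approx 0.185n^2$, $e_2\approx 0.236n^2$, $n_0\approx 0.267n$, $n_2\approx 0.387n$, $k=2b/n\approx 0.386n$. This point lies in the paper's regime $g_3$, where the subset minimum is $g_3\approx 0.0568n^4$; your bang-bang gives $(n-1)\bigl(n_0e_0+(k-n_0)e_1\bigr)\approx 0.0506n^4$. The target is $k(\alpha,t_A,m)=\alpha(1-\alpha)m\bigl(m+\tfrac32(1-\delta)n(t_A-(1-2\delta)n)\bigr)\approx 0.0558n^4$. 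So the paper's bound clears it by about $0.001n^4$ (consistent with the reported grid margin $\rho=0.00022$), while your bound falls short by about $0.005n^4$. The deficit is not a boundary artefact: it persists across a substantial portion of the box, because whenever $n_0$ is bounded away from $0$ your relaxation throws away the constraint that at most $\binom{n_0}{2}+n_0(n-n_0)$ edges can touch $V_0$, whereas the paper's pair-by-pair filling respects it automatically.

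The fix is to abandon the degree-sequence relaxation and instead bound $\sum_{uv\in B}(f^\dag(u)+f^\dag(v))$ by counting how many of the $b$ edges can lie in each of the four pair-classes $V_0\!\times\! V_0$, $V_0\!\times\! V_1$, $V_0\!\times\! V_2$, and the rest; this yields the four-piece function $g$ of the paper, and the numerical verification then goes through (with a tight but positive margin).
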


\begin{proof}
Let $B=E(G) \setminus A$ and, for $i \in \{0,1,2\}$, let $e_i$ and $n_i$ be defined as in Lemma~\ref{L:worstCaseEValues}. Let $f^\dag$ be the function given by Lemma~\ref{L:worstCaseEValues}. For $i \in \{0,1,2\}$, let $V_i=\{v \in V(G):f^\dag(v)=e_i\}$ and $n'_i=|V_i|$, and note that $n'_i=n_i+O(1)$ by Lemma~\ref{L:worstCaseEValues}. The following gives a classification of pairs $\{u,v\}$ of distinct vertices of $G$ according to their values of $f^\dag(u)+f^\dag(v)$.
\begin{itemize}[itemsep=0mm]
    \item
The $\binom{n'_0}{2}$ pairs in $\{\{u,v\}: u,v\in V_0\}$ each have $f^\dag(u)+f^\dag(v)=2e_0$.
    \item
The $n'_0n'_1$ pairs in $\{\{u,v\}: u \in V_0, v \in V_1\}$ each have $f^\dag(u)+f^\dag(v)=e_0+e_1$.
    \item
The $n'_0n'_2$ pairs in $\{\{u,v\}: u \in V_0, v \in V_2\}$ each have $f^\dag(u)+f^\dag(v)=e_0+e_2$.
    \item
All but at most $O(n)$ of the remaining pairs have $f^\dag(u)+f^\dag(v) \geq 2e_1$.
\end{itemize}
Observe that $2e_0<e_0+e_1<e_0+e_2<2e_1$ from Lemma~\ref{L:worstCaseEValues}. Let $b= |B| = (1-\alpha)m$. Then $B$ contains $b$ of the pairs we classified above and from our discussion so far it can be seen that
\begin{equation}\label{E:gFuncDefn}
\sum_{uv \in B}\big(f^\dag(u)+f^\dag(v)\big) \geq g(\alpha,t_A,m)+O(n^3),
\end{equation}
where
\[g(\alpha,t_A,m)=
\left\{
  \begin{array}{ll}
    g_1(\alpha,t_A,m) & \hbox{if $b \leq \frac{1}{2}n_0^2;$} \\
    g_2(\alpha,t_A,m) & \hbox{if $\frac{1}{2}n_0^2<b \leq n_0(\frac{1}{2}n_0+n_1);$} \\
    g_3(\alpha,t_A,m) & \hbox{if $n_0(\frac{1}{2}n_0+n_1)<b \leq n_0(\frac{1}{2}n_0+n_1+n_2);$} \\
    g_4(\alpha,t_A,m) & \hbox{if $b>n_0(\frac{1}{2}n_0+n_1+n_2),$}
  \end{array}
\right.\]
and $g_1,g_2,g_3,g_4$ are the functions defined by
\begin{align*}
  g_1(\alpha,t_A,m) &= 2be_0 \\
  g_2(\alpha,t_A,m) &= n_0^2e_0+(b-\tfrac{1}{2}n_0^2)(e_0+e_1) \\
  g_3(\alpha,t_A,m) &= n_0^2e_0+n_0n_1(e_0+e_1)+\big(b-n_0(\tfrac{1}{2}n_0+n_1)\big)(e_0+e_2) \\
  g_4(\alpha,t_A,m) &= n_0^2e_0+n_0n_1(e_0+e_1)+n_0n_2(e_0+e_2)+2 \big(b-n_0(\tfrac{1}{2}n_0+n_1+n_2)\big)e_1.
\end{align*}
Note that the $O(n^3)$ term in \eqref{E:gFuncDefn} allows us to neglect the $O(1)$ differences between $n_i$ and $n'_i$ for $i \in \{0,1,2\}$ and also the $O(n)$ edges not covered by our classification above.

By \eqref{E:gFuncDefn} and Lemma~\ref{L:worstCaseEValues}, we have $\kappa_A>\lambda_A$ provided that
\begin{equation}
\label{E:limit}
\lim_{n \rightarrow \infty} \frac{g(\alpha,t_A,m) - k(\alpha,t_A,m)}{n^4}>0,
\end{equation}
where
\[k(\alpha,t_A,m)=\alpha(1-\alpha)m \big( m+\tfrac{3}{2}n(1-\d)(t_A-(1-2\d)n)\big)\]
comes from the $\alpha(1-\alpha)m^2$ term in Lemma~\ref{L:worstCaseEValues} and our upper bound on $\lambda_A$ from Lemma~\ref{L:reqNumBound}(i). It is enough to compare the $O(n^4)$ terms in both $g$ and $k$.
The difference $g-k$ is a piecewise differentiable function in the parameters $\alpha,\mu,\tau$, where
$t_A=\tau n$ and $m=\mu \binom{n}{2}=\frac{1}{2}\mu n^2+O(n)$. From our hypotheses we have $0.446 \le \alpha \le 0.692$ and $\tau \geq 0.761$. From Lemma~\ref{L:mBound}(ii), taking $\d=0.148$, we have $0.852 \leq \mu \leq 0.863$. By \eqref{E:tavObvious}, because $\alpha \geq 0.446$, we have $t_{\av} \geq 0.446t_A+0.554(1-2\d)n$. Furthermore, from Lemma~\ref{L:tavBound}, substituting the bound of Lemma~\ref{L:mBound}(ii), we obtain $t_{\av} \leq \frac{2-6\d+9\d^2-3\d^3}{2-2\d+\d^2} n +O(1)$. Combining these two inequalities and solving for $t_A$, we see that $\tau \leq 0.814$. So our parameters take values in the box
\[\Xi = \{(\alpha,\tau,\mu):
0.446 \le \alpha \le 0.692,~
0.761 \le \tau \le 0.814,~
0.852 \le \mu \le 0.863\}.\]

It is not hard to obtain strong numerical evidence that \eqref{E:limit} holds for all $(\alpha,\tau,\mu) \in \Xi$ and hence for the truth of this lemma. Below we give a rigorous computer-assisted verification that \eqref{E:limit} holds for all $(\alpha,\tau,\mu) \in \Xi$.
For this verification, we invoke the following procedure:
\begin{enumerate}
\item
check that, for some positive constant $\rho$, the stronger estimate $g-k > \rho n^4$ holds at each combination of the parameters on a discrete grid $\Xi_h \subset \Xi$ having sub-interval width $h$;
\item
obtain an upper bound on the gradient norms $||\nabla g_i||$, $i=1,2,3,4$ and $||\nabla k||$ over $\Xi$.
\end{enumerate}
Here, gradients are with respect to $\alpha,\tau,\mu$.
Note in particular that, even though $g$ is piecewise defined on $\Xi$, step 2 above actually gives that each $g_i$ (and of course $h$) is well-behaved on the entire box $\Xi$.

Now, as long as
$\rho n^4/h > \max_i ||\nabla g_i|| + ||\nabla k||$,
the mean value theorem ensures that $g-k >0$ on $\Xi$. We carried out step 1 over $\Xi_h$ with $h=0.00001$ and $\rho = 0.00022$. Hence it suffices to show that $\max_i ||\nabla g_i|| + ||\nabla k|| \leq 22n^4+ o(n^4)$ for all  $(\alpha,\tau,\mu) \in \Xi$.

Using Mathematica to symbolically optimise $||\nabla k||$ we have that $||\nabla k|| \leq 0.187$. For the bounds on the gradients $||\nabla g_i||$, we first compute bounds on the leading terms of the constituent functions and their gradients. We present a summary of results in Table~\ref{T:DetailedBounds}. With the exception of the bounds on $||\nabla n_i||$ for $i \in \{0,1\}$, these were again obtained by using Mathematica to symbolically maximise the norms of the gradients.
\begin{table}[H]
\[
\begin{array}{r|ccc}
i & 0 & 1 & 2  \\
\hline
|e_i| n^{-2} \le 	      &0.169	&0.242 	&0.299\\
||\nabla e_i|| n^{-2} \le &0.513 	&0.501 	&0.554\\
|n_i| n^{-1} \le 	      &0.448	&0.490 	&0.458\\
||\nabla n_i|| n^{-1} \le & 7.589	&9.372 	&0.783\\
\end{array}
\hspace{1cm}
\begin{array}{rc}
|b| n^{-2} \le 	&0.240	\\
||\nabla b|| n^{-2} \le & 0.513 \\
\end{array}\]
\caption{Bounds on the leading terms of the constituent functions and gradients in $\Xi$.}
\label{T:DetailedBounds}
\end{table}
To obtain the bound on $||\nabla n_0||$, we considered our expression for $n_0$ as a quotient with numerator $n_0^{\rm n}=(n-n_2)e_1-\alpha m(t_A-n_2)$ and denominator $n_0^{\rm d}=e_1-e_0$. A calculation shows that $n_0^{\rm n}=\frac{1}{2}\alpha \mu(1-\d-\tau)+\frac{1}{4}\d^2\sqrt{\alpha \mu}+o(n^3)$.
We again used Mathematica to show that $|n_0^{\rm n}|n^{-3} \leq 0.0315$, $|n_0^{\rm d}|n^{-2} \geq 0.0692 $, and  $||\nabla n_0^{\rm d}||n^{-2} \leq 0.477$, and
\[||\nabla n_0^{\rm n}||n^{-3} \leq ||\nabla(\tfrac{1}{2}\alpha \mu(1-\d-\tau))||+ ||\nabla(\tfrac{1}{4}\d^2\sqrt{\alpha \mu})|| \leq 0.308.\] Using the quotient rule then gives
\[
||\nabla n_0|| \leq \mfrac{||\nabla n_0^{\rm n}||}{|n_0^{\rm d}|}+\mfrac{|n_0^{\rm n}|\,||\nabla n_0^{\rm d}||}{|n_0^{\rm d}|^2} \leq 7.589n + o(n).
\]
From this, because $n_1=n-n_0-n_2$, we have
\[
||\nabla n_1|| \leq 1+||\nabla n_0||+||\nabla n_2|| \leq 9.372n + o(n).
\]

So, with the bounds in Table~\ref{T:DetailedBounds} now established, we can now bound $||\nabla g_i||$ for $i \in \{1,2,3,4\}$.  Using the chain rule and triangle inequality,
\begingroup
\allowdisplaybreaks
\begin{align*}
||\nabla g_1|| &\leq 2\bigl(|b|\,||\nabla e_0|| + |e_0|\,||\nabla b||\bigr) \\
&\leq 0.420 n^4 + o(n^4),\\
&\\
||\nabla g_2||  &\le |n_0|\bigl(|n_0|\,||\nabla e_0||+2|e_0|\,||\nabla n_0||\bigr) +\bigl(|b|+\tfrac{1}{2}|n_0|^2\bigr)(||\nabla e_0||+||\nabla e_1||)\\
&\mathrel{\phantom{\leq}}+\bigl(|e_0|+|e_1|\bigr)\bigl(||\nabla b||+|n_0|||\nabla n_0||\bigr)\\
& \leq 3.206n^4+ o(n^4),\phantom{\big)}\\
&\\
||\nabla g_3|| &\leq |n_0|\bigl(|n_0|\,||\nabla e_0||+2|e_0|\,||\nabla n_0||\bigr)+|n_0|\,|n_1|\bigl(||\nabla e_0||+||\nabla e_1||\bigr)\phantom{\Big)}\\
&\mathrel{\phantom{\leq}} + \bigl(|e_0|+|e_1|\bigr)\bigl(|n_0|\,||\nabla n_1||+|n_1|\,||\nabla n_0||\bigr) +\Bigl(|b|+|n_0|\bigl(\tfrac{1}{2}|n_0|+|n_1|\bigr)\Bigr)\bigl(||\nabla e_0||+||\nabla e_2||\bigr)\\
&\mathrel{\phantom{\leq}}+\bigl(|e_0|+|e_2|\bigr)\Bigl(||\nabla b||+|n_0|\bigl(\tfrac{1}{2}||\nabla n_0||+||\nabla n_1||\bigr)+\bigl(\tfrac{1}{2}|n_0| +|n_1|\bigr)||\nabla n_0||\Bigr)\\
& \leq 10.863 n^4+ o(n^4),\phantom{\Big)}\text{and}\\
&\\
||\nabla g_4||  &\leq |n_0|\bigl(|n_0|\,||\nabla e_0||+2|e_0|\,||\nabla n_0||\bigr)+|n_0|\,|n_1|\bigl(||\nabla e_0||+||\nabla e_1||\bigr)\phantom{\Big)}\\
&\mathrel{\phantom{\leq}} + \bigl(|e_0|+|e_1|\bigr)\bigl(|n_0|\,||\nabla n_1||+|n_1|\,||\nabla n_0||\bigr)+|n_0|\,|n_2|\bigl(||\nabla e_0||+||\nabla e_2||)\phantom{\Big)}\\
&\mathrel{\phantom{\leq}} + \bigl(|e_0|+|e_2|\bigr)\bigl(|n_0|\,||\nabla n_2||+|n_2|\,||\nabla n_0||\bigr)+\Bigl(2|b|+|n_0|\bigl(|n_0|+2|n_1|+2|n_2|\bigr)\Bigr)||\nabla e_1||\\
&\mathrel{\phantom{\leq}}+|e_1|\Bigl(2||\nabla b||+|n_0|\bigl(||\nabla n_0||+2||\nabla n_1||+2||\nabla n_2||\bigr)+\bigl(|n_0| +2|n_1| +2|n_2|\bigr)||\nabla n_0||\Bigr)\\
&\leq 15.083n^4+ o(n^4).\phantom{\Big)}
\end{align*}
\endgroup
So $\max_i ||\nabla g_i|| + ||\nabla k|| \leq 22n^4+ o(n^4)$ for all  $(\alpha,\tau,\mu) \in \Xi$ as required and our verification is complete.
\end{proof}

We are now able to complete the proof of our main result.

\begin{proof}[\textbf{\textup{Proof of Theorem~\ref{T:fracMain}.}}]
By Lemma~\ref{L:flowNetwork}, it suffices to show that $\kappa_A \geq \lambda_A$ for each subset $A$ of $E(G)$. When $t_A \leq 0.7619n$, this is established by Lemma~\ref{L:windowCor}. When $t_A > 0.7619n$, this is established by Lemma~\ref{L:drossFowardCor} for $\alpha \leq 0.446$, by Lemma~\ref{L:drossRevCor} for $\alpha \geq 0.692$, and by Lemma~\ref{L:middleAlpha} for $0.446 < \alpha < 0.692$.
\end{proof}

As a concluding remark, it is straightforward to check that Lemmas~\ref{L:mBound}(ii), \ref{L:windowCor}, \ref{L:drossFowardCor} and \ref{L:drossRevCor} lead to continuous bounds on the parameters $\alpha,m,t_A$ in a neighbourhood of $\delta=0.148$.   And \eqref{E:limit} was verified in Lemma~\ref{L:middleAlpha} as a strict inequality.  Since the $g_i(\alpha,t_A,m)$ and $k(\alpha,t_A,m)$ are all continuous in an open set slightly larger than $\Xi$, it follows that the unsightly $\epsilon$ can be eliminated in Theorem~\ref{T:intMain}.

\section*{Acknowledgements}

The authors are grateful for the careful reading of the referees, which helped to clarify and tighten up the presentation in a few places.

\end{document}